\numberwithin{equation}{section}
\newtheorem{theorem}{Theorem}[section]
\newtheorem{corollary}[theorem]{Corollary}
\newtheorem{lemma}[theorem]{Lemma}
\newtheorem{prop}[theorem]{Proposition}
\theoremstyle{definition}
\newtheorem{remark}[theorem]{Remark}
\theoremstyle{definition}
\newtheorem{definition}[theorem]{Definition}
\theoremstyle{definition}
\theoremstyle{definition}
\def\dashint{\operatorname%
{\,\,\text{\bf-}\kern-.98em\DOTSI\intop\ilimits@\!\!}}
\def\\det{\text{\det}}
\def\Xint#1{\mathchoice
 {\XXint\displaystyle\textstyle{#1}}%
 {\XXint\textstyle\scriptstyle{#1}}%
 {\XXint\scriptstyle\scriptscriptstyle{#1}}%
 {\XXint\scriptscriptstyle\scriptscriptstyle{#1}}%
 \!\int}
\def\XXint#1#2#3{{\setbox0=\hbox{$#1{#2#3}{\int}$}
  \vcenter{\hbox{$#2#3$}}\kern-.5\wd0}}
\def\dashint{\Xint-}
\def\.5{\frac{1}{2}}
\newcommand{\RN}[1]{%
  \textup{\uppercase\expandafter{\romannumeral#1}}%
}
\renewcommand{\epsilon}{\varepsilon}
\newcounter{marnote}
\begin{document}

\title[The ideal MHD equations as a differential inclusion]{Nonuniqueness for high-dimensional ideal MHD equations via differential inclusion}

\author[C.X. Miao]{Changxing Miao}
\address[C.X. Miao] {Institute of Applied Physics and Computational Mathematics, P.O. Box 8009, Beijing, 100088, China.}
\email{miao\_changxing@iapcm.ac.cn}

\author[Z.W. Zhao]{Zhiwen Zhao}

\address[Z.W. Zhao]{School of Mathematics and Physics, University of Science and Technology Beijing, Beijing 100083, China.}
\email{zwzhao365@163.com}


\date{\today} 


\maketitle
\begin{abstract}
In this paper, we establish the non-uniqueness of solutions to the ideal magnetohydrodynamics equations in any dimension greater than three by proving the existence of infinitely many compactly supported weak solutions. In particular, these solutions fail to conserve the total energy. Our proof relies on the differential inclusion framework tailored to the geometry of ideal MHD system, which enables the simultaneous use of Baire category method and convex integration scheme.

\end{abstract}

\maketitle



\section{Introduction and main results}
This paper focuses on the study of the system of ideal magnetohydrodynamics (MHD), which is formed by linking the incompressible Euler equations and the Faraday–Maxwell system through Ohm’s law. While the viscous and resistive MHD equations are commonly employed in modeling conducting fluids such as plasmas and liquid metals (see e.g. \cite{B1993,D2001,GBL2006,ST1983}), the inviscid and non-resistive regime, referred to as ideal MHD, features an abundance of mathematical structures \cite{AK1998}. The idea MHD equations are formulated as follows:
\begin{align}\label{IMHD}
\begin{cases}	
\partial_{t}u+\mathrm{div}(u\otimes u-b\otimes b)+\nabla p=f,\\
\partial_{t}b+\mathrm{div}(b\otimes u-u\otimes b)=g,\\
\mathrm{div}u=\mathrm{div}b=0,
\end{cases}	\quad\text{in $\mathbb{R}_{x}^{n}\times\mathbb{R}_{t}$},
\end{align}	
where $(u,b):\mathbb{R}_{x}^{n}\times\mathbb{R}_{t}\rightarrow\mathbb{R}^{n}\times\mathbb{R}^{n}$ represents the pair of velocity and magnetic fields,  $p:\mathbb{R}_{x}^{n}\times\mathbb{R}_{t}\rightarrow\mathbb{R}$ denotes the pressure, the vector fields $f$ and $g$ are prescribed external forcing terms. The local well-posedness theory for smooth solutions of the MHD system and the global existence theory for weak solutions essentially follow the same pattern as in the Euler and Navier–Stokes cases, see \cite{DL1972,ST1983}. In the absence of external forces ($f=g=0$), classical solutions to the ideal MHD system preserve the total energy, which is also the only known coercive conserved quantity (indeed, the Hamiltonian \cite{KPY2021,VD1978} of the system).\ Equivalently, $\int(|u(x,t)|^{2}+|b(x,t)|^{2})dx$ is invariant in time, which naturally identifies $L_{t}^{\infty}L_{x}^{2}$ as the energy space.\ This motivates the following definition of weak solutions to \eqref{IMHD}.
\begin{definition}
A pair of vector-valued functions $(u,b)\in L_{\mathrm{loc}}^{2}$ is termed as a weak solution of the ideal MHD equations in \eqref{IMHD} provided that for any $t\in\mathbb{R}$, the vector fields $u(\cdot,t)$ and $b(\cdot,t)$ are weakly divergence free, and
\begin{align*}
\begin{cases}	
\int\big(u\cdot\partial_{t}\varphi+(u\otimes u-b\otimes b):\nabla\varphi+f\cdot \varphi\big)dxdt=0,\\
\int\big(b\cdot\partial_{t}\varphi+(b\otimes u-u\otimes b):\nabla\varphi+g\cdot \varphi\big)dxdt=0,
\end{cases}
\end{align*}	 
for any test function $\varphi\in C_{0}^{\infty}(\mathbb{R}_{x}^{n}\times\mathbb{R}_{t};\mathbb{R}^{n})$ with $\mathrm{div}\varphi=0.$

\end{definition}	
The aim of this paper is to investigate the phenomenon of non-uniqueness for weak solutions that do not conserve the total energy. This study continues the line of research initiated in \cite{DS2009}, broadening the scope from the Euler equations to the full ideal MHD system in all dimensions higher than three. In \cite{DS2009} De Lellis and Sz\'{e}kelyhidi introduced a new perspective by reformulating the Euler system as a differential inclusion, which allows them to make use of Baire category argument and convex integration scheme to prove the non-uniqueness of non-conservative weak solutions. Althought earlier work \cite{S1993,S1997,S2000} had already revealed the existence of pathological weak solutions, the convex integration method developed in \cite{DS2009} has exhibited great robustness and undergone further adaptations and developments, leading to its successful application to a broad class of equations in fluid dynamics and their various variants. One line of research, stimulated by Onsager’s conjecture \cite{O1949} for the Euler equations, addressed the problem of improving the regularity of weak solutions up to the Onsager-critical level.\ This programme originated in \cite{DS2013}, was subsequently developed in \cite{BDIS2015,DS2014,DS201701}, and ultimately culminated in Isett’s work \cite{I2018}, see also \cite{BDSV2019}. For a more detailed introduction to the development of convex integration scheme, we refer to \cite{BV201902,BV2021,DS2017,BMNV2023} and the references therein.

With regard to the ideal MHD equations in three dimensions, Bronzi, Lopes Filho and Nussenzveig Lopes \cite{BLN2015} constructed bounded wild solutions in a symmetry-reduced form of $u(x_{1},x_{2},x_{3},t)=(u_{1}(x_{1},x_{2},t),u_{2}(x_{1},x_{2},t),0)$ and $b(x_{1},x_{2},x_{3},t)=(0,0,b(x_{1},x_{2},t))$ by adapting the proof given in \cite{DS2009} to the two-dimensional Euler equations coupled with a passive tracer, which can be regarded as the symmetry-reduced three-dimensional ideal MHD system.\ This simplification essentially removes the intricate coupling effects of the magnetic field within the flow, thereby making the other two conservation laws including the cross helicity and the magnetic helicity all vanish identically.\ Motivated by the need to analyze the non-conservation of weak solutions with respect to these two quantities and using the intermittent convex integration scheme developed in \cite{BV2019},  Beekie, Buckmaster and Vicol \cite{BBV2020} constructed $L_{t}^{\infty}L^{2}_{x}$ weak solutions to the ideal MHD equations whose magnetic helicity is not conserved over time, which also provided an example of weak solutions to the ideal MHD system with finite energy that do not arise from weak ideal limits of Leray-Hopf solutions to the MHD equations according to Taylor’s conjecture \cite{FL2020}. Faraco, Lindberg and Sz\'{e}kelyhidi \cite{FLS2021} proved that the three-dimensional ideal magnetohydrodynamic equations admit infinitely many bounded compactly supported weak solutions, which violate the conservation of total energy and cross helicity but retain zero magnetic helicity.\ The proof in \cite{FLS2021} was based on the idea in \cite{DS2009} and the ideal MHD equations were relaxed into the linear conservation laws corresponding to the velocity and magnetic fields. While the relaxation for the velocity field involves a divergence-type structure, the relaxation for the magnetic field is provided by the Faraday–Maxwell equations with a curl-type structure.\ Within this relaxed formulation, one can readily recognize the conservation of magnetic helicity as an instance of compensated compactness in accordance with Tartar’s work \cite{T1983} when applied to the Maxwell system. Subsequently, they \cite{FLS2024} further constructed bounded turbulent solutions of the three-dimensional ideal MHD which dissipate energy and cross helicity but preserve magnetic helicity at any given constant value.\ Moreover, they \cite{FLS2024} also demonstrated that the $L^{3}$ integrability condition for magnetic helicity conservation in \cite{KL2007} is sharp.

The main results of this paper are now listed as follows.
\begin{theorem}\label{Main01}
For any $n\geq4$, there exist nontrivial compactly supported weak solutions $(u,b,p)\in L^{\infty}(\mathbb{R}^{n}_{x}\times\mathbb{R}_{t};\mathbb{R}^{n}\times \mathbb{R}^{n}\times\mathbb{R})$ of the ideal MHD in \eqref{IMHD} with $f=g=0$.
\end{theorem}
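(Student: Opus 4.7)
The plan is to adapt the differential inclusion framework of De~Lellis and Sz\'{e}kelyhidi~\cite{DS2009}, together with its three-dimensional MHD extension by Faraco, Lindberg and Sz\'{e}kelyhidi~\cite{FLS2021}, to the ideal MHD system in ambient dimension $n\ge 4$. The first step is to reformulate \eqref{IMHD} as a linear system supplemented by a pointwise nonlinear constraint. Introducing an auxiliary symmetric traceless matrix field $M$, an auxiliary antisymmetric matrix field $N$, and a modified pressure $q$, the system is recast as
\begin{align*}
\partial_t u + \mathrm{div}\, M + \nabla q &= 0, \\
\partial_t b + \mathrm{div}\, N &= 0, \\
\mathrm{div}\, u = \mathrm{div}\, b &= 0,
\end{align*}
together with the pointwise constraint $(u,b,M,N)\in K$, where
\[
K := \big\{M = u\otimes u - b\otimes b - \tfrac{|u|^2-|b|^2}{n}I,\ N = b\otimes u - u\otimes b,\ |u|^2+|b|^2 = r\big\}
\]
for a prescribed constant $r>0$. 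Any bounded state solving the linear system and pointwise contained in $K$ on a bounded space-time set, and vanishing outside it, yields a compactly supported weak solution of \eqref{IMHD} with $f=g=0$ after setting $p=q-\tfrac{|u|^2-|b|^2}{n}$.

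Second, one must carry out the geometric analysis of the relaxed problem. The wave cone $\Lambda$ is the set of constant amplitudes $(\bar U,\bar B,\bar M,\bar N,\bar q)$ for which the linear system admits plane-wave solutions $(\bar U,\bar B,\bar M,\bar N,\bar q)\,h(x\cdot\xi+ct)$ for some $(\xi,c)\ne 0$. The key ``$\Lambda$-convexity'' lemma to establish states: for any state in the relative interior of $K^{\mathrm{co}}$ (the convex hull of $K$ in the auxiliary variables) with a positive gap to $K$, there exists a nontrivial $\Lambda$-segment through it that remains in $K^{\mathrm{co}}$ and whose length is bounded below by an increasing function of the gap. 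This reduces to explicit algebra on the joint space of symmetric traceless and antisymmetric matrices, and produces plane-wave perturbations that increase $|u|^2+|b|^2$ by a quantitatively controlled amount.

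Third, the plan is to run the Baire category scheme. Fix a bounded open space-time set $\Omega$ and define $X_0$ to be the set of smooth subsolutions of the linear system compactly supported in $\Omega$, taking values in the interior of $K^{\mathrm{co}}$, and uniformly bounded in $L^\infty$. Endow its closure $\overline{X_0}$ in a suitable bounded weak-$*$ topology with a complete metric so that the embedding into $L^2_{\mathrm{loc}}$ is continuous. The $\Lambda$-convexity lemma above, combined with a localisation of plane waves via compactly supported potentials adapted to each of the two linear operators, shows that any subsolution with a strictly positive gap to $K$ can be perturbed within $X_0$ by an arbitrarily small amount in weak-$*$ while raising $\int(|u|^2+|b|^2)\,dx\,dt$ by a definite amount. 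Consequently, the functional $J(u,b,M,N,q):=\int(|u|^2+|b|^2)\,dx\,dt$ is a Baire-1 function on $\overline{X_0}$ whose points of continuity are exactly those states at which the pointwise constraint $K$ is actually achieved almost everywhere. The Baire category theorem then produces a residual set of continuity points, each of which yields a nontrivial compactly supported bounded weak solution of \eqref{IMHD}.

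The hard part, and the step where the dimension assumption $n\ge 4$ genuinely enters, is the $\Lambda$-convexity lemma in the second step. The antisymmetric block $N = b\otimes u - u\otimes b$ couples velocity and magnetic perturbations in a way that is absent from the Euler problem, and the Faraday--Maxwell-type curl relaxation exploited in \cite{FLS2021} is specific to three dimensions. For general $n$, producing an amplitude in $\Lambda$ that simultaneously respects both divergence-free conditions, matches the algebraic form of $M$ and $N$, and stays tangent to $K^{\mathrm{co}}$ at a prescribed interior point requires arranging the wave vector $\xi$ together with the perturbations $\bar U$ and $\bar B$ in mutually transverse subspaces. This is precisely where $n\ge 4$ is used: one needs enough independent coordinate directions to decouple the symmetric and antisymmetric constraints and to secure a quantitative lower bound on the admissible amplitude. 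Verifying this bound on the joint space of symmetric traceless and antisymmetric tensors is expected to be the most technical ingredient of the proof.
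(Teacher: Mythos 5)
Your outline follows the paper's strategy essentially step for step: Tartar-type relaxation with $M$ symmetric trace-free and $N$ antisymmetric, recasting as $\mathrm{div}_y W=0$ for a block matrix $W$, the wave cone $\Lambda$, localized plane waves from compactly supported potentials, a $\Lambda$-convexity estimate yielding a quantitative energy increment, and the Baire-1/residual-set argument on the weak-$*$ closure of smooth subsolutions. The only variant is the constraint defining $K$: you impose $|u|^2+|b|^2=r$, whereas the paper imposes $u,b\in\mathbb{S}^{n-1}$ and maximizes $\|u\|_{L^{m}}^{m}+\|b\|_{L^{m}}^{m}$. Both saturate the corresponding energy functional and recover the pointwise constraint via Carath\'eodory plus strict convexity, so this is not substantive, though the paper's normalization keeps $u\otimes u-b\otimes b$ automatically trace-free and so avoids the pressure adjustment your $K$ would require in the wave-cone check.

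The genuine gap is that you defer precisely the step you correctly flag as the crux: the $\Lambda$-convexity lemma, which is the only point where $n\ge 4$ enters. The heuristic about arranging $\xi$, $\bar U$, $\bar B$ in mutually transverse subspaces points the right way, but there is no argument behind it. The paper's resolution is a short algebraic fact: the difference of any two elements of $K$ lies in $\Lambda$. Given $u_i,b_i\in\mathbb{S}^{n-1}$, $i=1,2$, take $\xi=(\zeta,s)\in\mathbb{R}^{n+1}$ with $\zeta\in\mathrm{span}\{u_1-u_2,\,b_1,\,b_2\}^{\perp}\setminus\{0\}$ and $s=-u_1\cdot\zeta$. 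The orthogonal complement is nonempty because only three vectors must be annihilated, so $n\ge4$ suffices; with this choice $b_1\cdot\zeta=b_2\cdot\zeta=0$ and $u_1\cdot\zeta=u_2\cdot\zeta=-s$, and a direct check shows every block row of the associated $W$-matrix kills $\xi$. The quantitative length of the admissible $\Lambda$-segment is then obtained from the Carath\'eodory decomposition with a maximal-coefficient selection together with an elementary power inequality. Until you supply this calculation, what you have is a faithful outline of the paper's proof rather than a proof.
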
	
\begin{remark}
For any given $c_{0}>0$, applying the proof of Theorem \ref{Main01} with minor modification, we can find a solution $(u,b,p)$ as in Theorem \ref{Main01} such that
\begin{align*}
\begin{cases}	
\int\big(|u(x,t)|^{2}+|b(x,t)|^{2}\big)dx=c_{0},&\text{for a.e. }t\in[-1,1],\\
u(x,t)=b(x,t)=0,&\text{for $|t|>1$}.
\end{cases}	
\end{align*}	
Clearly, these solutions break the law of total energy conservation.
\end{remark}

\begin{theorem}\label{Main02}
For every solution $(u,b,p)$ obtained in Theorem \ref{Main01}, there exist a sequence of solutions $\{(u_{k},b_{k},p_{k},f_{k},g_{k})\}\subset C_{0}^{\infty}$ to the ideal MHD in \eqref{IMHD} such that $\|(u_{k},b_{k},p_{k})\|_{L^{\infty}}$ remains uniformly bounded, and for any $m\geq 2$,
\begin{align*}
(u_{k},b_{k},p_{k})\longrightarrow(u,b,p)\quad\text{in}\;L^{m},\quad(f_{k},g_{k})\longrightarrow(0,0)\quad\text{in $W^{-1,m}$}.
\end{align*}	

\end{theorem}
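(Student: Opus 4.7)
The plan is to take $(u_k, b_k, p_k)$ to be space--time mollifications of the wild solution $(u,b,p)$ produced in Theorem~\ref{Main01}, and to let $(f_k, g_k)$ be the mollification commutators generated by the quadratic nonlinearities of the ideal MHD system. Since Theorem~\ref{Main01} already delivers $L^\infty$ solutions that are compactly supported in $\mathbb{R}^n_x \times \mathbb{R}_t$, the uniform $L^\infty$ bound and the $L^m$ convergence $(u_k, b_k, p_k) \to (u, b, p)$ essentially come for free, and the only nontrivial ingredient is a standard bilinear commutator estimate.

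Concretely, I would fix a nonnegative radially symmetric mollifier $\varphi_\epsilon$ on $\mathbb{R}^n \times \mathbb{R}$ of unit mass and set $u_\epsilon := u * \varphi_\epsilon$, $b_\epsilon := b * \varphi_\epsilon$, $p_\epsilon := p * \varphi_\epsilon$. By Young's inequality, $\|(u_\epsilon, b_\epsilon, p_\epsilon)\|_{L^\infty} \leq \|(u, b, p)\|_{L^\infty}$ uniformly in $\epsilon$; these are smooth, divergence free (since convolution commutes with differentiation), and compactly supported in a slight enlargement of the support of $(u, b, p)$. Convolving \eqref{IMHD} (with $f=g=0$) against $\varphi_\epsilon$ and regrouping, $(u_\epsilon, b_\epsilon, p_\epsilon)$ solves the forced ideal MHD system with
\[
f_\epsilon = \mathrm{div}\bigl[(u_\epsilon \otimes u_\epsilon - (u \otimes u)_\epsilon) - (b_\epsilon \otimes b_\epsilon - (b \otimes b)_\epsilon)\bigr]
\]
and an analogous expression for $g_\epsilon$ built from the mixed products $b \otimes u$ and $u \otimes b$.

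The core step is the bilinear commutator estimate: for every pair $v, w \in \{u, b\}$ and every finite $m \geq 1$, $v_\epsilon \otimes w_\epsilon - (v \otimes w)_\epsilon \to 0$ in $L^m$ as $\epsilon \to 0$. This follows by writing
\[
v_\epsilon w_\epsilon - (vw)_\epsilon = v_\epsilon(w_\epsilon - w) + (v_\epsilon - v)w + (vw - (vw)_\epsilon),
\]
using the uniform $L^\infty$ bounds on $v, w$ and the fact that mollification of a compactly supported $L^\infty$ function converges in every $L^m$ with $m < \infty$. Bringing the divergence inside yields $\|(f_\epsilon, g_\epsilon)\|_{W^{-1,m}} \to 0$ for every finite $m$. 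Extracting $\epsilon_k \to 0$ and setting $(u_k, b_k, p_k, f_k, g_k) := (u_{\epsilon_k}, b_{\epsilon_k}, p_{\epsilon_k}, f_{\epsilon_k}, g_{\epsilon_k})$ produces the desired approximating sequence; one may take $m_*$ to be any fixed finite value $\geq 2$.

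No serious obstacle is anticipated, as the entire argument sits inside the $L^\infty$ framework where mollification behaves optimally and no derivative loss occurs. The only minor bookkeeping is to ensure that the pressure $p$ furnished by Theorem~\ref{Main01} is itself $L^\infty$ with compact support, in which case $p_\epsilon \to p$ in $L^m$ by the same standard mollifier argument that handles $u$ and $b$. The restriction to $m \leq m_*$ in the statement is artificial from this perspective and simply reflects the fact that $L^\infty$ convergence under mollification generally fails, forcing $m_* < \infty$.
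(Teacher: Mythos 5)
Your argument is correct, but it takes a genuinely different route from the paper's. You mollify the wild solution $(u,b,p)$ in space--time and control the forces through the bilinear commutator
\begin{align*}
v_\epsilon \otimes w_\epsilon - (v\otimes w)_\epsilon
= v_\epsilon(w_\epsilon - w) + (v_\epsilon - v)\otimes w + \big(v\otimes w - (v\otimes w)_\epsilon\big),
\end{align*}
which vanishes in $L^m$ for every finite $m$ because $u,b,p$ are $L^\infty$ with compact support; this is the Constantin--E--Titi / DiPerna--Lions style estimate. The paper instead re-uses the Baire-category structure underlying Theorem~\ref{Main01}: since $(u,b,M,Q,p)$ is a continuity point of the identity map $I_m:(X,d^\ast_\infty)\to L^m$ for $2\le m\le m_\ast$, it admits a sequence $\{(u_k,b_k,M_k,Q_k,p_k)\}\subset X_0$ of smooth relaxed solutions converging in $L^m$, and the forces are defined as $f_k=\operatorname{div}(u_k\otimes u_k-b_k\otimes b_k-M_k)$ and $g_k=\operatorname{div}(b_k\otimes u_k-u_k\otimes b_k-Q_k)$; convergence $M_k\to M=u\otimes u-b\otimes b$ and $Q_k\to Q=b\otimes u-u\otimes b$ plus uniform $L^\infty$ bounds then kill the forces in $W^{-1,m}$. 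Each approach has a small advantage: yours is self-contained and, as you note, yields convergence for every finite $m$, so the restriction $m\le m_\ast$ is indeed not needed; the paper's has the structural virtue that its approximants $(u_k,b_k,M_k,Q_k,p_k)$ are exact solutions of the linear relaxation system (Tartar's framework), not merely smoothings of the limit. Both proofs rely on the same two facts supplied by Theorem~\ref{Main01} -- that $(u,b,p)$ is in $L^\infty$ with compact support and that $p=q$ is explicitly available -- and your observation that the $m_\ast<\infty$ restriction is an artifact of $L^\infty$ non-convergence under mollification is accurate.
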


\begin{remark}
Theorem \ref{Main02} reveals that the nonconservative weak solutions found in Theorem \ref{Main01} can be realized in the strong limit as the forces vanish for a sequence of smooth solutions of the forced ideal MHD equations.

\end{remark}

The rest of this article is structured as follows.\ In Section \ref{SEC02}, we recast the ideal MHD equations in the form of a differential inclusion and construct highly oscillatory localized plane-wave solutions to the linearized relaxation system in every direction inside the wave cone.\ Section \ref{SEC03} begins by convexifying the nonlinear pointwise constraint in the differential inclusion framework. Then for any relaxed solution lying in the interior of this convex set, we add the highly oscillatory building blocks constructed in Section \ref{SEC02} in order to derive an energy growth inequality, which is the crucial step in iteratively producing a sequence of smooth relaxed solutions approximating a weak solution of the ideal MHD equations.\ In Section \ref{SEC04}, we complete the proof of Theorem \ref{Main01} by presenting two distinct approaches: the Baire category method and the convex integration scheme. Theorem \ref{Main02} then follows as a direct consequence.

\section{Relaxation of ideal MHD and localized plane wave}\label{SEC02}
Let $I_{n}$ be the $n\times n$ identity matrix. Write $\mathscr{S}^{n}$ for the linear space of symmetric $n\times n$ matrices, and $\mathscr{S}_{0}^{n}$ for its trace-free subspace. Let $\mathscr{S}_{-}^{n}$ represent the linear space of skew-symmetric $n\times n$ matrices. Then based on Tartar's framework \cite{T1979}, we see that the ideal MHD equations can be reformulated as follows:
\begin{align}\label{TR01}
\begin{cases}	
	\partial_{t}u+\mathrm{div}M+\nabla q=0,\\
	\partial_{t}b+\mathrm{div}Q=0,\\
	\mathrm{div}u=\mathrm{div}b=0,
\end{cases}	\quad\text{in $\mathbb{R}^{n}_{x}\times\mathbb{R}_{t}$},
\end{align}	
where $(u,b,M,Q,q)\in\mathbb{R}^{n}\times\mathbb{R}^{n}\times\mathscr{S}_{0}^{n}\times\mathscr{S}_{-}^{n}\times\mathbb{R}$ satisfies 
\begin{align}\label{TR02}
\begin{cases}
M=u\otimes u-b\otimes b-(|u|^{2}-|b|^{2})I_{n}/n,\\
Q=b\otimes u-u\otimes b,\\
q=p+(|u|^{2}-|b|^{2})/n.
\end{cases}
\end{align}	
Disregarding the nonlinear constraint in \eqref{TR02}, the equation in \eqref{TR01} is termed as a linearized relaxation system of idea MHD. For $(u,b,M,Q,q)\in\mathbb{R}^{n}\times\mathbb{R}^{n}\times\mathscr{S}_{0}^{n}\times\mathscr{S}_{-}^{n}\times\mathbb{R}$, let
\begin{align}\label{M01}
	W=\begin{pmatrix}
		M+qI_{n}&u\\
		u&0\\
		Q&b\\
		b&0	
	\end{pmatrix}.		
\end{align}
By introducing the new coordinate $y=(x,t)\in\mathbb{R}^{n+1}$, the relaxation equation in \eqref{TR01} can be rewritten as 
\begin{align}\label{TR03}
\mathrm{div}_{y}W=0,\quad\mathrm{in}\; \mathbb{R}_{x}^{n}\times\mathbb{R}_{t}.	
\end{align}	
Denote
\begin{align*}
\mathscr{K}_{1}=\{U\in\mathscr{S}^{n+1}|\,U_{(n+1)(n+1)}=0\},	
\end{align*}
and
\begin{align*}
\mathscr{K}_{2}=\left\{\begin{pmatrix}
	Q&b\\
	b&0	
\end{pmatrix}\bigg|	\,Q\in \mathscr{S}_{-}^{n},\,b\in\mathbb{R}^{n}\right\}.	
\end{align*}
Then we define		
\begin{align*}
\mathscr{M}=\left\{\begin{pmatrix} U\\ V\end{pmatrix}\bigg|\,U\in\mathscr{K}_{1},\,V\in\mathscr{K}_{2}\right\}.	
\end{align*}	
The relaxation equation in \eqref{TR03} admits a class of low-rank solutions of plane-wave type as follows: 
\begin{align*}
W(y)=h(y\cdot\xi)\overline{W},\quad\xi\in\mathbb{R}^{n+1}\setminus\{0\},	
\end{align*}	
where $h:\mathbb{R}\rightarrow\mathbb{R}$ is a smooth periodic function and 
\begin{align}\label{W01}
\overline{W}\in\{W\in\mathscr{M}\,|\,\exists \, \xi \in \mathbb{R}^{n+1} \setminus \{0\}\;\text{with }W\xi=0\}.	
\end{align}	 
The wave cone given by \eqref{W01} can be restated as
\begin{align*}
\Lambda:=\{(u,b,M,Q,q)\in\mathbb{R}^{n}\times\mathbb{R}^{n}\times\mathscr{S}_{0}^{n}\times\mathscr{S}_{-}^{n}\times\mathbb{R}\,|\,\exists \, \xi \in \mathbb{R}^{n+1} \setminus \{0\}\;\text{with }W\xi=0\},	
\end{align*}	
where $W$ is defined by \eqref{M01}. Remark that the correspondence
\begin{align*}
(u,b,M,Q,q)\in\mathbb{R}^{n}\times\mathbb{R}^{n}\times\mathscr{S}_{0}^{n}\times\mathscr{S}_{-}^{n}\times\mathbb{R}\longmapsto W\in\mathscr{M} 
\end{align*}
provides a linear isomorphism. Observe that since the dimension of $W$ is $n(n+2)$, it contains $n(n+2)$
independent degrees of freedom.\ Therefore, for any fixed $\xi\in\mathbb{R}^{n+1}\setminus\{0\}$, the dimension of the set $\{W\in\mathscr{M}\,|\,\exists\,W\in\mathscr{M}\text{ with }W\xi=0\}$ is $n^{2}-2$ and thus the wave cone $\Lambda$ is of considerable size. 

For later use, we now construct highly oscillatory localized plane waves as follows.
\begin{prop}\label{pro01}
Assume that $\overline{W}\in\{W\in\mathscr{M}\,|\,\exists \, \xi \in \mathbb{R}^{n+1} \setminus \{0\}\;\text{with }W\xi=0\}$ with $\overline{W}e_{n+1}\neq0$. Let $\sigma$ be the line segment joining $-\overline{W}$ and $\overline{W}$. Then for any $\delta,m>0$, one can construct a  solution $W\in C^{\infty}(\mathbb{R}^{n+1};\mathscr{M})$ of \eqref{TR03} satisfying that
\begin{align*}
\mathrm{supp}\,W\subset B_{1}(0),\;\,\sup\limits_{y\in B_{1}(0)}\mathrm{dist}(W(y),\sigma)<\delta,\;\,\dashint_{B_{1}(0)}|W(y)e_{n+1}|^{m}dy\geq\alpha|\overline{W}e_{n+1}|^{m},
\end{align*}
for some constant $\alpha=\alpha(n,m)>0$, where the notation $\dashint_{B_{1}(0)}$ denotes the integral average over the unit ball.

\end{prop}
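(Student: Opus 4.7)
The plan is a localized plane-wave construction in the spirit of Tartar and De Lellis--Sz\'ekelyhidi, tailored to the pair-matrix target space $\mathscr{M}$. The non-localized plane wave $W(y)=h(y\cdot\xi)\overline{W}$ already solves \eqref{TR03} thanks to $\overline{W}\xi=0$, but it is not compactly supported. To localize without breaking $\mathrm{div}_y W=0$, I would construct a constant-coefficient linear differential operator $\mathcal{L}_\xi$ of second order, acting on scalar potentials $\Psi:\mathbb{R}^{n+1}\to\mathbb{R}$, with two properties: (i) $\mathcal{L}_\xi[\Psi]$ takes values in $\mathscr{M}$ and satisfies $\mathrm{div}_y\mathcal{L}_\xi[\Psi]=0$ pointwise for every smooth $\Psi$; and (ii) whenever $\Psi(y)=\psi(y\cdot\xi)$ one has $\mathcal{L}_\xi[\Psi](y)=\psi''(y\cdot\xi)\,\overline{W}$. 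Once such an operator is available, feeding in a localized, highly oscillatory potential delivers the required $W$.

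To construct $\mathcal{L}_\xi$, I split $\overline{W}=(\overline{U},\overline{V})$ along the decomposition $\mathscr{M}\subset\mathscr{K}_1\oplus\mathscr{K}_2$. The factor $\overline{U}\in\mathscr{K}_1$ is symmetric with vanishing last diagonal entry and satisfies $\overline{U}\xi=0$; this is exactly the algebraic setup handled by the Euler-type second-order potential of De Lellis--Sz\'ekelyhidi \cite{DS2009}, which produces a constant-coefficient operator whose image consists of symmetric, row-divergence-free matrices with last diagonal entry zero and whose symbol at $\xi$ equals $\overline{U}$. The factor $\overline{V}\in\mathscr{K}_2$ fits into a Faraday--Maxwell-type system: the constraint $\overline{V}\xi=0$ is the integrability condition making $\overline{V}$ realizable as a second-derivative symbol of a one-form potential, in the spirit of the relaxation used in \cite{FLS2021}. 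Summing the two sectors yields $\mathcal{L}_\xi$, and a compactness argument on wave-cone directions gives a uniform norm bound.

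With $\mathcal{L}_\xi$ available, fix a cutoff $\phi\in C_c^\infty(B_1(0))$ with $\phi\equiv 1$ on $B_{1/2}(0)$ and $0\le\phi\le 1$, pick a $2\pi$-periodic $\psi\in C^\infty(\mathbb{R})$ with $\psi''=h$ for a fixed nontrivial bounded profile $h$ with $\|h\|_\infty\le 1$ (e.g.\ $h=\sin$), and set
\begin{align*}
W_N(y):=\mathcal{L}_\xi\!\left[\phi(y)\,N^{-2}\psi(Ny\cdot\xi)\right].
\end{align*}
Then $W_N\in C^\infty(\mathbb{R}^{n+1};\mathscr{M})$ is supported in $B_1(0)$ and satisfies $\mathrm{div}_y W_N=0$ by construction. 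The Leibniz expansion gives
\begin{align*}
W_N(y)=\phi(y)\,h(Ny\cdot\xi)\,\overline{W}+R_N(y),\qquad \|R_N\|_{L^\infty(B_1)}\le CN^{-1},
\end{align*}
because every term of $R_N$ places at least one derivative on $\phi$. The leading term $\phi(y)h(Ny\cdot\xi)\overline{W}$ lies on $\sigma$ pointwise since $|\phi\,h|\le 1$, so taking $N$ large makes $\mathrm{dist}(W_N(y),\sigma)<\delta$ throughout $B_1(0)$. For the mean lower bound, equidistribution of $Ny\cdot\xi$ modulo $2\pi$ yields
\begin{align*}
\lim_{N\to\infty}\dashint_{B_1(0)}|W_N(y)e_{n+1}|^m\,dy=\Bigl(\dashint_{B_1(0)}\phi(y)^m\,dy\Bigr)\Bigl(\tfrac{1}{2\pi}\int_0^{2\pi}|h(t)|^m\,dt\Bigr)|\overline{W}e_{n+1}|^m,
\end{align*}
and taking $\alpha(n,m)$ to be half the constant on the right gives the stated bound once $N$ is large enough.

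The principal obstacle is the algebraic step in the second paragraph: while the Euler-type sector $\mathscr{K}_1$ is handled by classical techniques and the Faraday--Maxwell sector $\mathscr{K}_2$ by the relaxation in \cite{FLS2021}, their simultaneous realization as second-order derivative symbols of a single scalar potential, uniformly across the wave cone, requires a careful case analysis. The dimension count $n^2-2$ recorded in the excerpt leaves ample slack in every dimension $n\ge 4$ to carry out this symbol inversion explicitly.
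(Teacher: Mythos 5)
Your overall strategy is the standard localized plane-wave construction and matches the paper's in spirit, but your route differs from the paper's at two points, and one of your final worries is a non-issue.

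First, the paper does not attempt to construct a $\xi$-dependent potential operator for general $\xi$. Instead it proves the special case $\overline{W}e_1 = 0$ (so $\xi = e_1$) by plugging an explicit pair of $4$-index tensors $(E,F)$, depending linearly on $\overline{W}=(\overline{U},\overline{V})$ and carrying the oscillation $\sin(Ny_1)/N^2$, into a fixed universal operator $\mathscr{L}$ (Lemma \ref{lem02}); then it reduces the general case to this one by a linear change of variables $y\mapsto A^t y$ with $Ae_1=\xi$, $Ae_{n+1}=e_{n+1}$, using Lemma \ref{lem01} to preserve divergence-freeness. Because $A^{-t}B_1(0)$ is an ellipsoid and not $B_1(0)$, the paper then recovers support in $B_1(0)$ and the $L^m$ lower bound via a Vitali-type covering/rescaling, summing rescaled copies of the transported plane wave over finitely many disjoint balls covering at least half the measure of $B_1(0)$. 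Your proposal bypasses the $\xi = e_1$ reduction and the covering entirely by (hypothetically) building $\mathcal{L}_\xi$ directly for each $\xi$ and then replacing the covering bound with a Weyl-equidistribution limit. Both routes are legitimate; your equidistribution bound is cleaner, while the paper's route is the one for which the potential operator is explicitly constructed (it is much easier to write down $(E,F)$ when $\xi=e_1$).

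Second, the "principal obstacle" you flag at the end is smaller than you make it. You worry about the "simultaneous realization" of both sectors "as second-order derivative symbols of a single scalar potential, uniformly across the wave cone," and appeal to the dimension count $n^2-2$ as slack to carry out a "symbol inversion." But the paper simply uses two independent tensor potentials $E$ (for the symmetric $\mathscr{K}_1$ sector) and $F$ (for the $\mathscr{K}_2$ sector), applies the same operator $\mathscr{L}$ to each, and stacks the outputs; there is no coupling to resolve and no symbol-inversion slack to invoke, and the dimension count $n^2-2$ in the text measures the size of the wave cone, not any potential-inversion degree of freedom. Likewise, uniformity "across the wave cone" is not needed for the $\delta$-neighborhood statement (one may take $N$ large depending on $\xi$ and $\overline{W}$); all that needs to be $\xi$- and $\overline{W}$-independent is the constant $\alpha$ in the $L^m$ lower bound, and that comes out automatically both from your equidistribution limit and from the paper's covering argument, since in the latter $A^{-t}\widehat{W}e_{n+1}=\overline{W}e_{n+1}$ because $Ae_{n+1}=e_{n+1}$. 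So your proposal is correct modulo the explicit potential construction, which the paper supplies via the $(E,F)$ ansatz of Part 1 together with Lemmas \ref{lem01}--\ref{lem02}, and the last paragraph's concern should be replaced simply by a reference to those lemmas.
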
	

Before proving Proposition \ref{pro01}, we first list two useful lemmas. Let 
\begin{align*}
\mathscr{G}=\{A\in\mathbb{R}^{(n+1)\times(n+1)}|\,\det A\neq0,\,Ae_{n+1}=e_{n+1}\}.
\end{align*}
For any $U\in\mathscr{K}_{1}$, $V\in\mathscr{K}_{2}$ and $A\in\mathscr{G}$, denote
\begin{align*}
(U,V)^{t}=\begin{pmatrix}
	U\\V	
\end{pmatrix},\quad A^{t}(U,V)^{t}A=\begin{pmatrix}
A^{t}UA\\A^{t}VA	
\end{pmatrix}.
\end{align*}
\begin{lemma}\label{lem01}
 For each $A\in\mathscr{G}$ and any divergence-free matrix field $W\in\mathscr{M}$, the matrix field $T(y):=A^{t}W(A^{-t}y)A$ remains divergence-free.

\end{lemma}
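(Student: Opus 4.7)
The statement is a chain-rule identity, and my approach would be to verify it block-wise. Since $W\in\mathscr{M}$ is the stacked pair of two $(n+1)\times(n+1)$ matrix fields $U\in\mathscr{K}_{1}$ and $V\in\mathscr{K}_{2}$, and the transformation $W\mapsto A^{t}W(A^{-t}y)A$ acts block-wise on each of them, it suffices to prove the following: for any invertible $(n+1)\times(n+1)$ matrix $A$ and any smooth matrix field $M:\mathbb{R}^{n+1}\to\mathbb{R}^{(n+1)\times(n+1)}$ with $\mathrm{div}_{y}M=0$, the transformed field $S(y):=A^{t}M(A^{-t}y)A$ remains divergence-free (with the same row-wise divergence convention implicit in \eqref{TR03}).

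Setting $z=A^{-t}y$, the chain rule gives $\partial_{y_{j}}z_{m}=(A^{-1})_{jm}$. Writing $S_{ij}(y)=\sum_{k,l}A_{ki}M_{kl}(z)A_{lj}$ and differentiating, I would obtain
$$(\mathrm{div}_{y}S)_{i}=\sum_{j}\partial_{y_{j}}S_{ij}(y)=\sum_{j,k,l,m}A_{ki}A_{lj}(A^{-1})_{jm}(\partial_{z_{m}}M_{kl})(z).$$
The crucial step is to collapse the inner sum over $j$ through $\sum_{j}A_{lj}(A^{-1})_{jm}=(AA^{-1})_{lm}=\delta_{lm}$, after which the expression simplifies to
$$(\mathrm{div}_{y}S)_{i}=\sum_{k,l}A_{ki}(\partial_{z_{l}}M_{kl})(z)=\sum_{k}A_{ki}(\mathrm{div}_{z}M)_{k}(z)=0,$$
using the hypothesis $\mathrm{div}_{z}M=0$. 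Applying the argument separately to the $U$ and $V$ blocks of $W$ yields $\mathrm{div}_{y}T=0$.

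The computation is mechanical and I foresee no genuine obstacle. It is worth remarking that the condition $Ae_{n+1}=e_{n+1}$ built into the definition of $\mathscr{G}$ plays no role in this particular lemma; only invertibility of $A$ is used. That structural constraint on $A$ is presumably reserved for subsequent steps in the proof of Proposition \ref{pro01}, for instance to preserve the vanishing of the $(n+1,n+1)$-entry demanded in $\mathscr{K}_{1}$, or to align the kernel direction of $\overline{W}$ in the wave cone with the axis $e_{n+1}$ used in the quantitative lower bound $\dashint_{B_{1}(0)}|W(y)e_{n+1}|^{m}dy\geq\alpha|\overline{W}e_{n+1}|^{m}$.
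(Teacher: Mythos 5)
Your proof is correct and is exactly the chain-rule computation that the paper implicitly relies on: the paper does not prove this lemma itself but appeals to Lemma 3.3 of De Lellis and Sz\'{e}kelyhidi \cite{DS2009}, whose argument is precisely the block-wise index calculation you carry out, with the collapse $\sum_{j}A_{lj}(A^{-1})_{jm}=\delta_{lm}$ doing all the work. Your closing remark is also accurate: for the divergence-free claim alone only invertibility of $A$ is used, and the constraint $Ae_{n+1}=e_{n+1}$ in $\mathscr{G}$ is there so that the congruence preserves the vanishing $(n+1,n+1)$-entry required by $\mathscr{K}_{1}$ and fixes the distinguished time direction $e_{n+1}$ entering the quantitative lower bound in Proposition \ref{pro01}.
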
	

Set
\begin{align*}
\mathscr{H}_{1}=\big\{&E=(E_{ij}^{kl})\in(C^{\infty}(\mathbb{R}^{n+1}))^{(n+1)^{4}}|\,E_{ij}^{kl}=-E_{ji}^{kl},\,E_{ij}^{kl}=-E_{ij}^{lk},\\
&E_{(n+1)i}^{(n+1)j}=0,\,1\leq i,j\leq n+1\big\},\\
\mathscr{H}_{2}=\big\{&E=(E_{ij}^{kl})\in \mathscr{H}_{1}|\,E^{il}_{kj}=0,\,1\leq i,j\leq n,1\leq k,l\leq n+1\big\},\\
\mathscr{H}_{3}=\big\{&E=(E_{ij}^{kl})\in(C^{\infty}(\mathbb{R}^{n+1}))^{(n+1)^{4}}|\,E_{ij}^{kl}=-E_{ji}^{kl},\,E_{ij}^{kl}=-E_{ij}^{lk},\\
&E_{kj}^{(n+1)l}=E_{k(n+1)}^{jl}=0,\,1\leq j,k,l\leq n+1\big\}.
\end{align*}	
For any $E\in \mathscr{H}_{1},F\in \mathscr{H}_{2},G\in \mathscr{H}_{3}$, define the linear operator $\mathscr{L}$ as follows:
\begin{align*}
\mathscr{L}(E,F,G)
=&\frac{1}{2}\begin{pmatrix}
\big(\sum_{k,l}\partial_{kl}(E_{kj}^{il}+E_{ki}^{jl})\big)_{i,j=1,...,n+1}\vspace{0.5ex}\\  
\big(\sum_{k,l}\partial_{kl}(F_{kj}^{il}+F_{ki}^{jl}+G_{kj}^{il}-G_{ki}^{jl})\big)_{i,j=1,...,n+1}
\end{pmatrix}.
\end{align*}	

\begin{lemma}\label{lem02}
For any $E\in \mathscr{H}_{1},F\in \mathscr{H}_{2},G\in \mathscr{H}_{3}$, we have $W:=\mathscr{L}(E,F,G)\in \mathscr{M}$ and $\mathrm{div}W=0.$

\end{lemma}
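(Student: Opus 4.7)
The statement has two independent parts: (i) $\mathrm{div}_y W=0$, and (ii) $W\in\mathscr{M}$. Both reduce to index-level symmetry bookkeeping, with no analytic content beyond commuting derivatives and exploiting antisymmetry.

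For (i) I would verify that each row of $\mathscr{L}(E)$ is individually divergence-free in $y=(x,t)$, and then repeat the argument verbatim for $F$. Writing the divergence row-wise,
\[
\sum_{j}\partial_{j}\mathscr{L}(E)_{ij}=\tfrac{1}{2}\sum_{j,k,l}\partial_{jkl}\big(E_{kj}^{il}+E_{ki}^{jl}\big),
\]
I would note that in the first triple sum the Hessian factor $\partial_{jk}$ is symmetric in $(j,k)$ while $E_{kj}^{il}$ is antisymmetric in the lower pair $(k,j)$, so the contraction over $(j,k)$ vanishes; in the second sum $\partial_{jl}$ is symmetric in $(j,l)$ and $E_{ki}^{jl}$ is antisymmetric in the upper pair $(j,l)$, so the contraction over $(j,l)$ vanishes. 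The identical calculation with $F$ in place of $E$ yields $\mathrm{div}_y W=0$.

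For (ii) I would check the defining structural conditions of $\mathscr{K}_1$ and $\mathscr{K}_2$ directly from the formula. The symmetry $\mathscr{L}(E)_{ij}=\mathscr{L}(E)_{ji}$ is immediate, since the swap $i\leftrightarrow j$ just exchanges the two summands $E_{kj}^{il}$ and $E_{ki}^{jl}$. For the bottom-right entry, specializing $i=j=n+1$ gives
\[
\mathscr{L}(E)_{(n+1)(n+1)}=\sum_{k,l}\partial_{kl}E_{k(n+1)}^{(n+1)l},
\]
and then applying lower-index antisymmetry $E_{k(n+1)}^{(n+1)l}=-E_{(n+1)k}^{(n+1)l}$ together with the hypothesis $E_{(n+1)i}^{(n+1)j}=0$ kills this entry. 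Hence $\mathscr{L}(E)\in\mathscr{K}_1$. The same template—extract the required entry-wise symmetry/antisymmetry conditions, then invoke the antisymmetries of $F$ in the upper and lower index pairs along with $F_{(n+1)i}^{(n+1)j}=0$—establishes the block structure required for $\mathscr{L}(F)\in\mathscr{K}_2$.

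The main obstacle I anticipate is not the divergence cancellation, which is routine once one recognizes the symmetric-against-antisymmetric contractions, but rather the careful entry-by-entry verification of the $\mathscr{K}_2$-structure for $\mathscr{L}(F)$: the definition of $\mathscr{K}_2$ singles out the direction $e_{n+1}$ asymmetrically (skew-symmetric upper-left $n\times n$ block, prescribed placement of the vector in the last row and column, zero bottom-right corner), so one must match these conditions against precisely which index patterns survive the antisymmetries of $F\in\mathscr{H}$ and the constraint $F_{(n+1)i}^{(n+1)j}=0$. The calculation is bookkeeping-heavy but uses no idea beyond the elementary principle that drives the divergence argument.
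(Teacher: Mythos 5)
Your treatment of the divergence identity and of the $\mathscr{K}_1$-membership of $\mathscr{L}(E)$ is correct and matches what the paper intends (the paper itself does not write out a proof but refers to Lemmas 3.3--3.4 of De~Lellis--Sz\'ekelyhidi, so your contraction argument supplies the missing detail). The swap $j\leftrightarrow k$ against lower-index antisymmetry and $j\leftrightarrow l$ against upper-index antisymmetry is exactly the right mechanism, and the $(n+1)(n+1)$ computation is fine.

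The gap is in the part you explicitly defer, the verification that $\mathscr{L}(F)\in\mathscr{K}_2$, and it is not mere bookkeeping. Your own symmetry observation kills it: the identity $\mathscr{L}(G)_{ij}=\mathscr{L}(G)_{ji}$ obtained by swapping $i\leftrightarrow j$ uses nothing but the shape of the formula and therefore applies verbatim to $G=F$, so $\mathscr{L}(F)$ is a \emph{symmetric} $(n+1)\times(n+1)$ matrix. But $\mathscr{K}_2$ requires the upper-left $n\times n$ block to be skew-symmetric, so a symmetric $\mathscr{L}(F)$ lies in $\mathscr{K}_2$ only if that block vanishes identically, which fails for generic $F\in\mathscr{H}$ (the constraint $F_{(n+1)i}^{(n+1)j}=0$ only removes entries with both a lower and an upper index equal to $n+1$, and the $n\times n$ block contraction involves terms with $c=i\le n$ that survive). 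So ``the same template'' does not establish $\mathscr{L}(F)\in\mathscr{K}_2$; it establishes $\mathscr{L}(F)\in\mathscr{K}_1$. This points to a typo in the paper's displayed operator: the $F$-block should presumably carry a minus sign, $\frac12\sum_{k,l}\partial_{kl}\bigl(F_{kj}^{il}-F_{ki}^{jl}\bigr)$, which by the same swap is skew-symmetric; one then also has to reconcile this with the fact that $\mathscr{K}_2$ as displayed has symmetric last row/column, most naturally by reading $\mathscr{K}_2$ as the skew space $\left\{\left(\begin{smallmatrix} Q & b\\ -b^{t} & 0\end{smallmatrix}\right)\right\}$, a change of sign that leaves the relaxation $\mathrm{div}_y W=0$ unaffected. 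A complete proof must either carry out this correction or supply the genuinely different potential for the mixed-symmetry $\mathscr{K}_2$; asserting that the template transfers unchanged is exactly the step that fails.
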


Applying the proofs of Lemmas 3.3 and 3.4 of \cite{DS2009} with a slight modification, we obtain that Lemmas \ref{lem01} and \ref{lem02} hold. We now give the proof of Proposition \ref{pro01}.
\begin{proof}[Proof of Proposition \ref{pro01}]
The proof is divided into two parts as follows.
	
{\bf Part 1.} Consider the case when $\overline{W}\in\mathscr{M}$ satisfies $\overline{W}e_{1}=0$ and $\overline{W}e_{n+1}\neq0$. For simplicity, write $\overline{W}=(\overline{U},\overline{V})^{t}$ for some $\overline{U}\in\mathscr{K}_{1}$ and $\overline{V}\in\mathscr{K}_{2}$. Decompose $\overline{V}$ into two parts as follows:
\begin{align*}
\overline{V}=\overline{V}_{\mathrm{sym}}+\overline{V}_{\mathrm{skew}},
\end{align*}
where
\begin{align*}
\overline{V}_{\mathrm{sym}}\in\left\{\begin{pmatrix}
		0&b\\
		b&0	
	\end{pmatrix}\bigg|\,b\in\mathbb{R}^{n}\right\},\quad \overline{V}_{\mathrm{skew}}\in\left\{\begin{pmatrix}
		Q&0\\
		0&0	
	\end{pmatrix}\bigg|	\,Q\in \mathscr{S}_{-}^{n}\right\}.	
\end{align*}
Since $\overline{W}e_{1}=0$, it is valid to take 
\begin{align*}
\begin{cases}
E_{1i}^{1j}=E_{i1}^{j1}=-E_{i1}^{1j}=-E_{1i}^{j1}=\overline{U}_{ij}\frac{\sin(N y_{1})}{N^{2}},\\
F_{1i}^{1j}=F_{i1}^{j1}=-F_{i1}^{1j}=-F_{1i}^{j1}=(\overline{V}_{\mathrm{sym}})_{ij}\frac{\sin(N y_{1})}{N^{2}},\\
G_{1i}^{1j}=G_{i1}^{j1}=-G_{i1}^{1j}=-G_{1i}^{j1}=(\overline{V}_{\mathrm{skew}})_{ij}\frac{\sin(N y_{1})}{N^{2}},
\end{cases}		
\end{align*}	
and let the rest of the entries be zero. Then we have $E\in \mathscr{H}_{1},F\in \mathscr{H}_{2},G\in \mathscr{H}_{3}$. Choose a smooth cutoff function $\psi$ with the property that $\|\psi\|_{L^{\infty}}\leq1$, $\psi=1$ on $B_{1/2}(0)$, and $\mathrm{supp}\,\psi\subset B_{1}(0)$. Denote $W=\mathscr{L}(\psi E,\psi F,\psi G).$ Then from Lemma \ref{lem02}, we obtain that $W\in\mathscr{M}$ and $\mathrm{div}W=0.$ By a direct computation, we have
\begin{align*}
W(y)=\overline{W}\sin(N y_{1}),\quad\mathrm{in}\; B_{1/2}(0),	
\end{align*}	
which leads to that for any $m>0$,
\begin{align}\label{AQ03}
\dashint_{B_{1}(0)}|W(y)e_{n+1}|^{m}dy\geq\frac{|\overline{W}e_{n+1}|^{m}}{|B_{1}(0)|}\int_{B_{1/2}(0)}|\sin(Ny_{1})|^{m}dy\geq2\alpha|\overline{W}e_{n+1}|^{m},
\end{align}	
for some constant $\alpha=\alpha(n,m)>0$. For any given $\delta>0,$ if $N$ is chosen sufficiently large, we deduce
\begin{align*}
\|W-\psi\mathscr{L}(E,F,G)\|_{L^{\infty}}\leq C\|\psi\|_{C^{2}}\|(E,F,G)\|_{C^{1}}\leq\frac{C\|\psi\|_{C^{2}}}{N}<\delta,	
\end{align*}	
which, together with the fact that $\mathrm{supp}(\psi\mathscr{L}(E,F,G))\subset B_{1}(0)$ and $\psi\mathscr{L}(E,F,G)\in\sigma$, shows that $W$ takes values in the $\delta$-neighborhood of $\sigma$. 

{\bf Part 2.} The general case is treated by reduction to the preceding situation. Observe that there exists $\xi\in\mathbb{R}^{n+1}\setminus\{0\}$ such that $\overline{W}\xi=0$. This, in combination with the fact that $\overline{W}e_{n+1}\neq0$, reads that the vectors $\xi$ and $e_{n+1}$ span a two-dimensional subspace. Pick a basis $\{\eta_{i}\}_{i=1}^{n+1}$ of $\mathbb{R}^{n+1}$ satisfying that $\eta_{1}=\xi$ and $\eta_{n+1}=e_{n+1}$. Then one can find $A\in\mathscr{G}$ such that $Ae_{i}=\eta_{i}$, $i=1,...,n+1$. Let $\widehat{W}=A^{t}\overline{W}A$. Then we obtain 
\begin{align*}
e_{n+1}^{t}\widehat{W}e_{n+1}=(Ae_{n+1})^{t}\overline{W}Ae_{n+1}=e_{n+1}^{t}\overline{W}e_{n+1}=0,	
\end{align*}	 
which gives that $\widehat{W}\in \mathscr{M}.$ Moreover, we have $\widehat{W}e_{1}=0$ and $\widehat{W}e_{n+1}\neq0$. Define the map as follows:
\begin{align*}
T:\,&\mathscr{M}\longrightarrow\mathscr{M},\\
&W\longmapsto A^{-t}WA^{-1}.
\end{align*}
Obviously, $T$ is a bijective linear map. Then by the same arguments as in {\bf Part 1}, we construct a smooth map $\widetilde{W}:\mathbb{R}^{n+1}\rightarrow\mathscr{M}$ compactly supported in $B_{1}(0)$ such that its image is contained in the $\|T\|^{-1}\delta$-neighborhood of the line segment $\tau$ connecting $-\widehat{W}$ and $\widehat{W}$, and $\widetilde{W}(y)=\widehat{W}\sin(Ny_{1})$ in $B_{1/2}(0)$. Let
\begin{align*}
W^{\ast}(y)=A^{-t}\widetilde{W}(A^{t}y)A^{-1}.	
\end{align*}	 
Then we have $\mathrm{supp}\,W^{\ast}\subset A^{-t}B_{1}(0)$. From Lemma \ref{lem01}, we deduce that $\mathrm{div}_{y}W^{\ast}=0$. Since the above isomorphism $T$ transforms the line segment $\tau$ onto $\sigma$, we obtain that the image of $W^{\ast}$ lies in the $\delta$-neighborhood of $\sigma$. Next we make use of standard covering and rescaling arguments to complete the proof. Specifically, there exist finitely many points $y_{k}\in B_{1}(0)$ and radii $r_{k}>0$, $k=1,...,N_{\ast}$ such that the regions $A^{-t}B_{r_{k}}(y_{k})$, after rescaling and translation, are mutually disjoint and entirely lie within $B_{1}(0)$, and
\begin{align*}
\sum^{N_{\ast}}_{k=1}|A^{-t}B_{r_{k}}(y_{k})|\geq\frac{|B_{1}(0)|}{2}.	
\end{align*}	
Define
\begin{align*}
W=\sum^{N_{\ast}}_{k=1}W^{\ast}_{k},\quad\mathrm{with}\;W^{\ast}_{k}(y)=W^{\ast}((y-A^{-t}y_{k})/r_{k}).	
\end{align*}	
Therefore, $W$ is a smooth, divergence-free matrix field with support contained in $B_{1}(0)$ and its values remain within the $\delta$-neighborhood of $\sigma$. Furthermore, for any $m>0$ and $k=1,...,N_{\ast}$, we have
\begin{align*}
&\int_{A^{-t}B_{r_{k}}(y_{k})}|W^{\ast}_{k}(y)e_{n+1}|^{m}dy=\int_{A^{-t}B_{r_{k}}(y_{k})}|A^{-t}\widetilde{W}((A^{t}y-y_{k})/r_{k})e_{n+1}|^{m}dy\notag\\
&=\int_{B_{1}(0)}\frac{r_{k}^{n+1}|A^{-t}\widetilde{W}(y)e_{n+1}|^{m}}{|\det A^{t}|}dy\notag\\
&\geq\frac{|A^{-t}B_{r_{k}}(y_{k})||A^{-t}\widehat{W}e_{n+1}|^{m}}{|B_{1}(0)|}\int_{B_{1/2}(0)}|\sin(Ny_{1})|^{m}dy\notag\\
&\geq2\alpha|A^{-t}B_{r_{k}}(y_{k})||\overline{W}e_{n+1}|^{m},
\end{align*}
where the constant $\alpha$ is given by \eqref{AQ03}. Then we obtain that for $m>0$,
\begin{align*}
\dashint_{B_{1}(0)}|W(y)e_{n+1}|^{m}dy\geq&\sum^{N_{\ast}}_{k=1}\int_{A^{-t}B_{r_{k}}(y_{k})}\frac{|W^{\ast}_{k}(y)e_{n+1}|^{m}}{|B_{1}(0)|}dy\notag\\
\geq&\frac{2\alpha\sum^{N_{\ast}}_{k=1}|A^{-t}B_{r_{k}}(y_{k})|}{|B_{1}(0)|}|\overline{W}e_{n+1}|^{m}\geq\alpha|\overline{W}e_{n+1}|^{m}.	
\end{align*}	
The proof is complete.

\end{proof}

\section{Convex geometry and energy growth}\label{SEC03}

Introduce the following sets
\begin{align*}
K=\{&(u,b,M,Q)\in\mathbb{S}^{n-1}\times\mathbb{S}^{n-1}\times\mathscr{S}_{0}^{n}\times\mathscr{S}_{-}^{n}|\,M=u\otimes u-b\otimes b,\notag\\
&Q=b\otimes u-u\otimes b\},	\quad\mathscr{U}=\mathrm{int}(K^{c_{0}}\times[-1,1]).
\end{align*}	
Here, $K^{c_{0}}$ is the convex hull of $K$ and the notation $\mathrm{int}$ represents the topological interior of the set in $\mathbb{R}^{n}\times\mathbb{R}^{n}\times\mathscr{S}_{0}^{n}\times\mathscr{S}_{-}^{n}$. Clearly, any $(u,b,M,Q,q)$ that solves the relaxation equation of \eqref{TR01} and lies in the convex extremal points of $\overline{\mathscr{U}}$ actually provides a nontrivial solution to the ideal MHD equations. We now prove that $0$ belongs to the interior of $\mathscr{U}$ and thus there exist plenty of localized plane waves contained therein, which are subsequently incorporated into the relaxed solutions to progressively approximate the weak solution of ideal MHD system. 
\begin{lemma}\label{lem03}
For the above-defined sets $K$ and $\mathscr{U}$, we have $0\in\mathrm{int}\,K^{c_{0}}$ and thus $0\in\mathscr{U}$.	
	
\end{lemma}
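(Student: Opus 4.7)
The plan is to verify $0 \in K^{c_0}$ directly and then use a supporting hyperplane argument to exclude $0 \in \partial K^{c_0}$. The containment $0 \in K^{c_0}$ is immediate: for any unit vector $v$, the choice $u = b = v$ in the definition of $K$ forces $M = v\otimes v - v\otimes v = 0$ and $Q = v\otimes v - v\otimes v = 0$, so $(v, v, 0, 0) \in K$; averaging this with $(-v, -v, 0, 0) \in K$ produces the origin. Once $0 \in \mathrm{int}\,K^{c_0}$ is established, the conclusion $0 \in \mathscr{U}$ follows from $0 \in \mathrm{int}[-1, 1]$.

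For the interior property I would argue by contradiction. If $0 \notin \mathrm{int}\,K^{c_0}$, the supporting hyperplane theorem produces a nonzero linear functional
\[
\ell(u, b, M, Q) = \alpha \cdot u + \beta \cdot b + A : M + B : Q, \quad \alpha, \beta \in \mathbb{R}^n,\; A \in \mathscr{S}_0^n,\; B \in \mathscr{S}_-^n,
\]
with $\ell \geq 0$ on $K^{c_0}$, and hence on $K$. Using $A^{\top} = A$ and $B^{\top} = -B$, evaluation on the parameterisation of $K$ yields
\[
F(u, b) := \alpha \cdot u + \beta \cdot b + u^{\top} A u - b^{\top} A b + 2\, b^{\top} B u \geq 0 \quad \text{for all } (u, b) \in \mathbb{S}^{n-1} \times \mathbb{S}^{n-1}.
\]

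The key mechanism is the $\mathbb{Z}_2 \times \mathbb{Z}_2$ symmetry $(u, b) \mapsto (\pm u, \pm b)$ on the parameter space. Summing the four nonnegative quantities $F(\pm u, \pm b)$ cancels every contribution that is odd in $u$ or $b$ and leaves $4(u^{\top} A u - b^{\top} A b) \geq 0$; swapping $u \leftrightarrow b$ gives the reverse inequality, so $u^{\top} A u = b^{\top} A b$ is constant on $\mathbb{S}^{n-1}$. Symmetry of $A$ then forces $A = cI_n$, and the traceless condition forces $A = 0$. With $A$ eliminated, the four nonnegative quantities sum to $0$ and hence vanish individually, so $F(u, b) \equiv 0$ on $\mathbb{S}^{n-1} \times \mathbb{S}^{n-1}$. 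Specialising to $u = b$ and to $u = -b$, and using $u^{\top} B u = 0$, yields $(\alpha + \beta) \cdot u = 0$ and $(\alpha - \beta) \cdot u = 0$; hence $\alpha = \beta = 0$, and the residual identity $b^{\top} B u \equiv 0$ for all unit $u, b$ forces $B = 0$. This contradicts $\ell \neq 0$ and establishes $0 \in \mathrm{int}\,K^{c_0}$. The only anticipated obstacle is keeping careful track of the signs under the four substitutions; the argument is entirely driven by the discrete symmetry of $K$ and is insensitive to the dimension $n$.
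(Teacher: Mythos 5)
Your proof is correct, and it takes a genuinely different (dual) route from the paper's. The paper argues on the primal side via a moment map $T : C(\mathbb{S}^{n-1}\times\mathbb{S}^{n-1})\to\mathbb{R}^n\times\mathbb{R}^n\times\mathscr{S}_0^n\times\mathscr{S}_-^n$ that integrates the parameterization of $K$ against $\phi\,d(\mu\times\mu)$: it checks $T$ is surjective by exhibiting $n(n+2)$ explicit test functions with linearly independent images, invokes the Open Mapping Theorem to get openness, and observes that a small $C^0$-ball around the constant function $1$ consists of probability densities, so $T$ of this ball is an open neighborhood of $0$ inside $K^{c_0}$. You instead separate: assuming a nonzero supporting functional $(\alpha,\beta,A,B)$ at $0$, you pull it back to a scalar inequality $F(u,b)\geq 0$ on $\mathbb{S}^{n-1}\times\mathbb{S}^{n-1}$, average over the four sign changes $(\pm u,\pm b)$ to kill the linear and bilinear pieces, use $\mathrm{tr}\,A = 0$ together with the symmetry of $A$ to eliminate the surviving quadratic form, and use skew-symmetry of $B$ to finish. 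Your route is more elementary (no Open Mapping Theorem, no orthogonality computations on the sphere) and exposes the algebraic mechanism very cleanly, whereas the paper's is constructive and mirrors the De Lellis--Sz\'ekelyhidi template so that it generalizes readily to other moment-map settings. One tiny point of hygiene in your version: when invoking the supporting-hyperplane theorem you should account for the a priori possible degenerate case $\mathrm{int}\,K^{c_0}=\emptyset$; since you already know $0\in K^{c_0}$, in that case $K^{c_0}$ would lie in a proper linear subspace and any nonzero functional vanishing on it serves as $\ell$, so the contradiction goes through unchanged.
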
	 

\begin{proof}
Denote by $\mu$ and $\mu\times\mu$ the normalized Haar measure on $\mathbb{S}^{n-1}$ and the corresponding product
measure on $\mathbb{S}^{n-1}\times \mathbb{S}^{n-1}$, respectively. Introduce the following linear map 
\begin{align*}
&T:C(\mathbb{S}^{n-1}\times \mathbb{S}^{n-1})\longrightarrow\mathbb{R}^{n}\times\mathbb{R}^{n}\times\mathscr{S}_{0}^{n}\times\mathscr{S}_{-}^{n},\notag\\
&\phi\longmapsto\int_{\mathbb{S}^{n-1}\times\mathbb{S}^{n-1}}
(u,b,u\otimes u-b\otimes b,b\otimes u-u\otimes b)\,
\phi(u,b)\,d(\mu\times\mu).
\end{align*}	
The first step is to prove that the mapping $T$ is open. In light of the Open Mapping Theorem, it suffices to show that $T$ is surjective. For that purpose,  we only need to demonstrate that the image of $T$ has $n(n+2)$ linearly independent elements, since the dimension of $\mathbb{R}^{n}\times\mathbb{R}^{n}\times\mathscr{S}_{0}^{n}\times\mathscr{S}_{-}^{n}$ is $n(n+2)$. Introduce three positive constants as follows:
\begin{align*}
\gamma_{1}=\int_{\mathbb{S}^{n-1}} u_{1}^{2}d\mu,\quad
\gamma_{2}=\int_{\mathbb{S}^{n-1}} u_{1}^{2} u_{2}^{2}d\mu,\quad\gamma_{3}=\int_{\mathbb{S}^{n-1}}\big(u_{1}^{2}-1/n\big)^{2}d\mu.
\end{align*}
First, we deduce that for $i=1,2,...,n,$
\begin{align*}
T(\phi)=\gamma_{1}
\begin{cases}
( e_{i},0,0,0),&\text{if $\phi(u,b)=u_{i}$,}\\
(0,e_{i},0,0),&\text{if $\phi(u,b)=b_{i}$}.
\end{cases}
\end{align*}
Second, by choosing $\phi(u,b)=u_{i}u_{j}$ or $\phi(u,b)=-b_{i}b_{j}$ with $1\leq i<j\leq n$, we obtain
\begin{align*}
T(\phi)=\gamma_{2}(0,0,e_{i}\otimes e_{j}+e_{j}\otimes e_{i},0).
\end{align*}
Third, taking $\phi(u,b)=u_{i}^{2}-1/n$ or $\phi(u,b)=-b_{i}^{2}+1/n$ with $i=1,2,...n,$ we have
\begin{align*}
T(\phi)=\gamma_3\Big(0,0,e_i\otimes e_i-\frac{1}{n-1}\sum_{j\neq i}e_{j}\otimes e_{j},0\Big).
\end{align*}
Finally, picking $\phi(u,b)=u_{i} b_{j}$ with $1\leq i<j\leq n$, we derive
\begin{align*}
T(\phi)= \gamma_{1}^{2}(0,0,0,e_{j}\otimes e_{i}-e_{i}\otimes e_{j}).
\end{align*}
These above facts imply that $T$ is surjective.

Next we proceed to prove $0\in\mathrm{int}\,K^{c_{0}}$ by using the open mapping property of $T$. Define two sets as follows: 
\begin{align*}
G=\big\{\psi|\,\|\psi\|_{C(\mathbb{S}^{n-1}\times\mathbb{S}^{n-1})}<1/2\big\},\quad\widetilde{G}=\{\beta_{\psi}+\psi|\,\psi\in G\},
\end{align*}
where $\beta_{\psi}=1-\int_{\mathbb{S}^{n-1}\times\mathbb{S}^{n-1}}\psi d(\mu\times\mu).$ Note that if $\phi\geq0$ and $\int_{\mathbb{S}^{n-1}\times\mathbb{S}^{n-1}}\phi d(\mu\times\mu)=1$, we have $T(\phi)\in K^{c_{0}}$. It then follows that $T(\widetilde{G})\subset K^{c_{0}}$. This, together with the fact that $T(a_{0})=0$ for any fixed constant $a_{0}\in\mathbb{R}$, we deduce that $0=T(0)\in T(G)\subset K^{c_{0}}.$ Since $T$ is an open mapping, we then have $0\in\mathrm{int}\,K^{c_{0}}$ and $0\in\mathscr{U}$.

\end{proof}

\begin{definition}
Fix a constant $\lambda\in(0,1)$. For any given $z\in\mathscr{U}$, if there exist $\bar{z}\in\Lambda\setminus\{0\}$ and $r>0$ such that 
\begin{align*}
\sigma\subset \mathscr{U},\quad\mathrm{dist}(\sigma,\partial\mathscr{U})\geq\lambda\mathrm{dist}(z,\partial\mathscr{U}),\quad\text{with }\sigma:=\{z+s\bar{z}\,|\,|s|\leq r\},
\end{align*}		
then we say that $\mathscr{U}$ possesses $\lambda$-relaxed local convexity at $z$ along the wave-cone direction $\bar{z}$.	
\end{definition}	

Next we show that the set $\mathscr{U}$ enjoys $1/2$-relaxed local convexity in the wave-cone directions.
\begin{lemma}\label{lem05}
Let $n\geq4$. For any $(u,b,M,Q,q)\in\mathscr{U}$, there exists $(\bar{u},\bar{b},\overline{M},\overline{Q},0)\in\Lambda$ such that $\mathscr{U}$ has $1/2$-relaxed local convexity at $(u,b,M,Q,q)$ in the wave-cone direction $(\bar{u},\bar{b},\overline{M},\overline{Q},0)$. Moreover, for any $\varepsilon\in(0,1)$ and $m\in(1,\infty)$, we have
\begin{align}\label{AE03}
\frac{\varepsilon^{m-1}}{C_{0}}(2-|u|^{m}-|b|^{m})\leq	|(\bar{u},\bar{b})|^{m}+\frac{2\varepsilon^{m}}{C_{0}},
\end{align}		
for some constant $C_{0}=C_{0}(n,m)>0$.
\end{lemma}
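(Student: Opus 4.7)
The overall strategy is to construct $(\bar u,\bar b,\overline M,\overline Q,0)$ as the difference of two elements of $K$ taken from a Caratheodory decomposition of $(u,b,M,Q)$, then to verify that this difference lies in the wave cone $\Lambda$ using the dimension assumption $n\geq 4$. Since $(u,b,M,Q)\in\mathrm{int}\,K^{c_0}$, Caratheodory's theorem provides a finite convex representation
\begin{align*}
(u,b,M,Q)=\sum_{i=1}^N\lambda_i\bigl(u_i,b_i,u_i\otimes u_i-b_i\otimes b_i,b_i\otimes u_i-u_i\otimes b_i\bigr),
\end{align*}
with $(u_i,b_i)\in\mathbb{S}^{n-1}\times\mathbb{S}^{n-1}$, $\lambda_i>0$ and $\sum\lambda_i=1$. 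The gap $2-|u|^m-|b|^m$ is exactly the Jensen defect $\sum\lambda_i(|u_i|^m+|b_i|^m)-(|u|^m+|b|^m)$, which quantifies how spread out the $(u_i,b_i)$ are. The first step is to apply a variance/Clarkson-type convexity estimate together with a pigeonholing to extract a pair of indices $(i_0,j_0)$ satisfying
\begin{align*}
\bigl|(u_{j_0}-u_{i_0},b_{j_0}-b_{i_0})\bigr|^m\geq\tfrac{1}{C_0}\bigl(2-|u|^m-|b|^m\bigr),
\end{align*}
and to define $(\bar u,\bar b,\overline M,\overline Q):=z_{j_0}-z_{i_0}$, appending $0$ for the $q$-component.

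The next step is to check that $\bar z\in\Lambda$. Writing $u^{\pm}=(u_{j_0}\pm u_{i_0})/2$ and $b^{\pm}=(b_{j_0}\pm b_{i_0})/2$, direct expansion of the quadratic tensor differences yields $\overline M=u^+\otimes\bar u+\bar u\otimes u^+-b^+\otimes\bar b-\bar b\otimes b^+$ and $\overline Q=b^+\otimes\bar u-\bar u\otimes b^++\bar b\otimes u^+-u^+\otimes\bar b$. Seeking $\xi=(\eta,\tau)\in\mathbb{R}^{n+1}\setminus\{0\}$ with $W\xi=0$ for $W$ given by \eqref{M01} with $\bar q=0$, the identity $(v\otimes w)\eta=(w\cdot\eta)v$ reduces the system $W\xi=0$, in the generic case where $\bar u,\bar b$ are linearly independent, to the conditions $\eta\perp\{\bar u,\bar b,b^+\}$ together with $\tau=-u^+\cdot\eta$; the degenerate case imposes at most three linear constraints on $\eta$. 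Since $\dim\mathrm{span}\{\bar u,\bar b,b^+\}\leq 3$, the assumption $n\geq 4$ makes the orthogonal complement nontrivial, so some $\eta\neq 0$ can be chosen and $\bar z\in\Lambda$ follows. For the $1/2$-relaxed local convexity, I would shift weights via $\lambda_{i_0}\mapsto\lambda_{i_0}-s$, $\lambda_{j_0}\mapsto\lambda_{j_0}+s$, which keeps $z+s\bar z$ inside $K^{c_0}\times\{q\}$ for $|s|\leq\min(\lambda_{i_0},\lambda_{j_0})$; taking $r:=\min\bigl\{\tfrac{1}{2}\mathrm{dist}(z,\partial\mathscr{U})/|\bar z|,\ \min(\lambda_{i_0},\lambda_{j_0})\bigr\}$ then confines the segment $\sigma=\{z+s\bar z:|s|\leq r\}$ to the ball of radius $\tfrac{1}{2}\mathrm{dist}(z,\partial\mathscr{U})$ about $z$, giving $\mathrm{dist}(\sigma,\partial\mathscr{U})\geq\tfrac{1}{2}\mathrm{dist}(z,\partial\mathscr{U})$ at once.

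Finally, the energy inequality \eqref{AE03} follows directly from the lower bound on $|(\bar u,\bar b)|^m$: for any $\varepsilon\in(0,1)$ and $m>1$ one has $\varepsilon^{m-1}\leq 1$, hence
\begin{align*}
\frac{\varepsilon^{m-1}}{C_0}\bigl(2-|u|^m-|b|^m\bigr)\leq\frac{1}{C_0}\bigl(2-|u|^m-|b|^m\bigr)\leq|(\bar u,\bar b)|^m\leq|(\bar u,\bar b)|^m+\frac{2\varepsilon^m}{C_0}.
\end{align*}
I expect the main technical obstacle to be the quantitative pigeonholing: transforming the Jensen defect into a pointwise lower bound on a single pairwise difference, with a constant uniform in the Caratheodory number $N$ and in $m\in(1,\infty)$. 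For $m=2$ the identity $1-|u|^2=\sum\lambda_i|u_i-u|^2$ makes this immediate, but for general $m>1$ one needs an $m$-dependent Clarkson-type convexity estimate (or an equivalent two-stage argument: first bound the Jensen defect by a quantity such as $\max_i\lambda_i|u_i-u|^m+\max_i\lambda_i|b_i-b|^m$, then select a pair by comparing the chosen $u_{i_0}$ to the barycenter $u$), and one must carefully track how the constants degenerate near $m=1$ or $m=\infty$.
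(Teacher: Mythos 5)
Your wave-cone verification (the expansion of $\overline M,\overline Q$ in terms of $u^\pm,b^\pm$ and the choice $\eta\perp\{\bar u,\bar b,b^+\}$, $\tau=-u^+\cdot\eta$) matches the paper's argument, and your proof of the $1/2$-relaxed local convexity is sound as far as it goes. However, there are two genuine problems, and the second one is fatal.

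First, the normalization of $\bar z$ is off. You take the unweighted difference $\bar z=z_{j_0}-z_{i_0}$ and compensate by choosing a small $r$. The paper instead takes the \emph{weighted} difference $\bar z'=\tfrac12\theta_l(z_l'-z_1')$ after arranging $\theta_1$ to be the largest coefficient and $l$ to maximize $\theta_k|(u_k,b_k)-(u_1,b_1)|$; the maximality of $\theta_1$ guarantees $\{z'+s\bar z':|s|\le 2\}\subset K^{c_0}$, so the convexity holds with $r=1$. This matters: when Lemma~\ref{lem05} is invoked inside Lemma~\ref{lem10}, the segment is $\sigma=\{z_*+s(\bar u,\bar b,\overline M,\overline Q,0):|s|\le 1\}$ and the plane wave from Proposition~\ref{pro01} oscillates between $\pm\overline W$, not $\pm r\overline W$. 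Your choice would have to be rescaled to $r\bar z$ before it could be fed into that construction, and then the relevant quantity in \eqref{AE03} would be $r^m|(\bar u,\bar b)|^m$, not $|(\bar u,\bar b)|^m$. Since your $r$ can be arbitrarily small (it involves $\min(\lambda_{i_0},\lambda_{j_0})$ and $\mathrm{dist}(z,\partial\mathscr U)$), the bound you would actually need does not follow.

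Second, and more seriously, the unweighted, $\varepsilon$-free pigeonhole inequality you propose,
\begin{align*}
|(u_{j_0}-u_{i_0},\,b_{j_0}-b_{i_0})|^m\;\geq\;\tfrac{1}{C_0}\bigl(2-|u|^m-|b|^m\bigr),
\end{align*}
is \emph{false} for $m>2$. Indeed, if $D:=\max_{i,j}|(u_i-u_j,b_i-b_j)|$ is small, then $1-|u|^2=\tfrac12\sum_{i,j}\lambda_i\lambda_j|u_i-u_j|^2\le\tfrac12D^2$ and, since $1-|u|\le 1-|u|^2$, one gets $1-|u|^m\le m(1-|u|)\le\tfrac{m}{2}D^2$, so $2-|u|^m-|b|^m\le mD^2$. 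For $m>2$ this right-hand side is of order $D^2$, much larger than $D^m$ as $D\to 0$, so no constant $C_0$ can make your inequality hold. A concrete failure: take $N=2$, $\lambda_1=\lambda_2=\tfrac12$, $b_1=b_2$, $u_1=e_1$, $u_2=\cos\theta\,e_1+\sin\theta\,e_2$; then $2-|u|^4-|b|^4\sim\theta^2/2$ while $|(u_2-u_1,0)|^4\sim\theta^4$. Consequently, your final chain
\begin{align*}
\frac{\varepsilon^{m-1}}{C_0}\bigl(2-|u|^m-|b|^m\bigr)\leq\frac{1}{C_0}\bigl(2-|u|^m-|b|^m\bigr)\leq|(\bar u,\bar b)|^m
\end{align*}
breaks in the middle step. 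The $\varepsilon$-terms in \eqref{AE03} are not cosmetic: the paper obtains the bound via the interpolation inequality of Lemma~\ref{lem06}, namely $(a+c)^m\le(1+\varepsilon)a^m+C(m)\varepsilon^{1-m}c^m$, applied to $2-|u|^m-|b|^m$, followed by $(1-|u|)^m\le|u-u_1|^m$ and the maximality of $\theta_l|(u_l-u_1,b_l-b_1)|$. The whole point of the $\varepsilon^{m-1}$ and $\varepsilon^m$ factors is to accommodate exactly the regime $m>2$, $D\to 0$ where your stronger inequality cannot hold. To repair the argument you should (i) build the Carath\'{e}odory weight into $\bar z$, and (ii) keep the $\varepsilon$-dependence, deriving \eqref{AE03} from Lemma~\ref{lem06} rather than trying to prove the $\varepsilon$-free pointwise bound.
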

\begin{remark}
Lemma \ref{lem03} ensures that the set $\mathscr{U}$ has a nonempty interior, thereby guaranteeing the validity of the conclusions of Lemma \ref{lem05}.
\end{remark}

Before proving Lemma \ref{lem05}, we first recall the following elementary inequality.
\begin{lemma}\label{lem06}	
For any $0<\varepsilon<1$, $m>1$ and $a,c>0$, there holds
\begin{align}\label{AQ06}
(a+c)^{m}\leq(1+\varepsilon)a^{m}+\frac{C(m)}{\varepsilon^{m-1}}c^{m},
\end{align}
for some constant $C(m)>0$.
\end{lemma}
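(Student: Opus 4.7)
The plan is to exploit the convexity of the map $t\mapsto t^{m}$ on $[0,\infty)$ (valid since $m>1$) via a one-parameter Jensen splitting. For any $\theta\in(0,1)$ I would write
\begin{align*}
a+c=(1-\theta)\cdot\frac{a}{1-\theta}+\theta\cdot\frac{c}{\theta},
\end{align*}
and apply Jensen's inequality to this convex combination, obtaining
\begin{align*}
(a+c)^{m}\leq (1-\theta)^{1-m}\,a^{m}+\theta^{1-m}\,c^{m}.
\end{align*}
The free parameter $\theta$ is then tuned to match the prescribed coefficient $(1+\varepsilon)$ of $a^{m}$: the choice $\theta:=1-(1+\varepsilon)^{-1/(m-1)}$ makes $(1-\theta)^{1-m}=1+\varepsilon$ exactly, so the first term is automatically in the form required by \eqref{AQ06}.

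It remains to bound the second coefficient $\theta^{1-m}=\theta^{-(m-1)}$ by a multiple of $\varepsilon^{-(m-1)}$. Applying the mean value theorem to the smooth function $s\mapsto(1+s)^{-1/(m-1)}$ on $[0,\varepsilon]$ would produce some $\xi\in(0,\varepsilon)$ with
\begin{align*}
\theta=1-(1+\varepsilon)^{-1/(m-1)}=\frac{\varepsilon}{m-1}(1+\xi)^{-m/(m-1)}\geq c(m)\,\varepsilon,
\end{align*}
where $c(m):=\tfrac{1}{m-1}\cdot 2^{-m/(m-1)}>0$; here the hypothesis $\varepsilon<1$ is used to guarantee $1+\xi\leq 2$. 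Taking the $-(m-1)$-th power of this lower bound yields $\theta^{1-m}\leq C(m)\varepsilon^{-(m-1)}$ with $C(m):=c(m)^{-(m-1)}$ depending only on $m$, and substituting back into the Jensen inequality above recovers exactly \eqref{AQ06}.

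I do not anticipate any real obstacle; the argument is entirely elementary calculus. The one point that demands attention is bookkeeping: one must verify that the final constant $C(m)$ depends only on $m$ and not secretly on $\varepsilon$. This is precisely the purpose of replacing the exact formula for $\theta$ by the crude linear lower bound $\theta\geq c(m)\varepsilon$ before inverting, since the raw expression $\bigl[1-(1+\varepsilon)^{-1/(m-1)}\bigr]^{-(m-1)}$ would otherwise leave $\varepsilon$ tangled up inside the constant.
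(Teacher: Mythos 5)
Your proposal is correct and is essentially the paper's own argument: the paper also uses convexity of $t\mapsto t^{m}$ with the same choice $\theta=1-(1+\varepsilon)^{-1/(m-1)}$, arriving at the bound $(a+c)^{m}\leq(1+\varepsilon)a^{m}+2^{m}(m-1)^{m-1}\varepsilon^{1-m}c^{m}$, which matches the constant your mean-value-theorem step produces. The only difference is that you spell out the elementary estimate $\theta\geq c(m)\varepsilon$ that the paper leaves as a direct calculation.
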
	
\begin{remark}
Based on the binomial theorem and Young’s inequality, the proof of Lemma \ref{lem06} was previously given in Lemma 2.1 of \cite{MZ2024} in the case when $m$ is an integer. In fact, the inequality in \eqref{AQ06} also remains true for non-integer values of $m$. This fact can be obtained from the convexity of the power function. To be specific, by letting $\theta=1-(1+\varepsilon)^{-1/(m-1)}$, it follows from a direct calculation that for any $0<\varepsilon<1$,
\begin{align*}
(a+c)^{m}\leq(1-\theta)^{1-m}a^{m}+\theta^{1-m}c^{m}\leq(1+\varepsilon)a^{m}+2^{m}(m-1)^{m-1}\varepsilon^{1-m}c^{m}.
\end{align*}	

\end{remark}	

We are now ready to give the proof of Lemma \ref{lem05}.
\begin{proof}[Proof of Lemma \ref{lem05}]
For simplicity, denote $z'=(u,b,M,Q)\in \mathrm{int}\,K^{c_{0}}$ and $z=(z',q)\in \mathscr{U}$.\ Making use of Carath\'{e}odory’s theorem, we see that there exist $\{\theta_{k}\}_{k=1}^{N}\subset(0,1)$ and $z'_{k}=(u_{k},b_{k},M_{k},Q_{k})\in K$ such that
\begin{align*}
z'=\sum^{N}_{k=1}\theta_{k}z'_{k},\quad\sum^{N}_{k=1}\theta_{k}=1,
\end{align*}
where $N=n(n+2)$. Without loss of generality, we assume that $\theta_{1}$ is the largest coefficient in the set $\{\theta_{k}\}_{k=1}^{N}$. Fix $l>1$ such that 
\begin{align*}
\theta_{l}|(u_{l},b_{l})-(u_{1},b_{1})|=\max_{2\leq k\leq N}\theta_{k}|(u_{k},b_{k})-(u_{1},b_{1})|.
\end{align*}
Define 
\begin{align*}
\bar{z}'=(\bar{u},\bar{b},\overline{M},\overline{Q})=\frac{1}{2}\theta_{l}(z'_{l}-z'_{1}).	
\end{align*}	
Then we have from a straightforward computation that
\begin{align*}
z'\pm \bar{z}'\in\mathrm{int}\,K^{c_{0}},\quad \{z'+s\bar{z}'|\,|s|\leq2\}\subset K^{c_{0}}.	
\end{align*}	
Denote $\bar{z}=(\bar{z}',0)$. Then we also have $\sigma:=\{z+s\bar{z}|\,|s|\leq1\}\subset \mathscr{U}$ and $\{z+s\bar{z}|\,|s|\leq2\}\subset \overline{\mathscr{U}}$. Since $\mathscr{U}$ is convex, we assume without loss of generality that 
\begin{align*}
\mathrm{dist}(z+\bar{z},\partial\mathscr{U})=\mathrm{dist}(\sigma,\partial\mathscr{U}).
\end{align*}	
Then there exists some $\tilde{z}\in\partial\mathscr{U}$ such that $\mathrm{dist}(z+\bar{z},\tilde{z})=\mathrm{dist}(\sigma,\partial\mathscr{U})$. Define two segments as follows:
\begin{align*}
\tau_{1}=\{s(z+\bar{z})+(1-s)\tilde{z}|\,s\in[0,1]\},\quad\tau_{2}=\{s(z+2\bar{z})+(1-s)\tilde{z}|\,s\in[0,1]\}.	
\end{align*}	
Starting from the point $z$, we draw the ray parallel to the segment $\tau_{1}$.\ Then we extend the segment $\tau_{2}$ and denote by $\hat{z}$ the intersection point of these two rays. Then we obtain
\begin{align*}
\frac{1}{2}=\frac{\mathrm{dist}(z+w,\tilde{z})}{\mathrm{dist}(z,\hat{z})}\leq\frac{\mathrm{dist}(\sigma,\partial\mathscr{U})}{\mathrm{dist}(z,\partial\mathscr{U})}.	
\end{align*}	
Moreover, it follows from Lemma \ref{lem06} that for any $\varepsilon\in(0,1)$,
\begin{align*}
2-|u|^{m}-|b|^{m}\leq&\frac{C(m)}{\varepsilon^{m-1}}((1-|u|)^{m}+(1-|b|)^{m})+\varepsilon(|u|^{m}+|b|^{m})\notag\\
\leq&\frac{C(m)}{\varepsilon^{m-1}}(|u-u_{1}|^{m}+|b-b_{1}|^{m})+2\varepsilon\notag\\
\leq&\frac{NC(m)}{\varepsilon^{m-1}}\max_{1\leq k\leq N}\theta_{k}^{m}|(u_{k}-u_{1},b_{k}-b_{1})|^{m}+2\varepsilon\notag\\
\leq&\frac{2^{m}NC(m)}{\varepsilon^{m-1}}|(\bar{u},\bar{b})|^{m}+2\varepsilon,
\end{align*}
which implies that \eqref{AE03} holds.

It remains to prove that $(\bar{u},\bar{b},\overline{M},\overline{Q},0)\in\Lambda$. For that purpose, it only needs to demonstrate that for any given $u_{i},b_{i}\in\mathbb{S}^{n-1}$, $i=1,2,$ there exists $\xi\in\mathbb{R}^{n+1}\setminus\{0\}$ such that
\begin{align}\label{DE01}
	\begin{pmatrix}
		u_{1}\otimes u_{1}-b_{1}\otimes b_{1}-u_{2}\otimes u_{2}+b_{2}\otimes b_{2} & u_{1}-u_{2} \\
		u_{1}-u_{2} & 0 \\
		b_{1}\otimes u_{1}-u_{1}\otimes b_{1}-b_{2}\otimes u_{2}+u_{2}\otimes b_{2} & b_{1}-b_{2} \\
		b_{1}-b_{2} & 0
	\end{pmatrix}\xi=0.
\end{align}
Denote $\xi=(\zeta,s)$, where $\zeta\in\mathbb{R}^{n}.$ Then \eqref{DE01} becomes
\begin{align}\label{DE02}
\begin{cases}	
(u_{1}\otimes u_{1}-b_{1}\otimes b_{1}-u_{2}\otimes u_{2}+b_{2}\otimes b_{2})\zeta +s(u_{1}-u_{2})=0, \\
(b_{1}\otimes u_{1}-u_{1}\otimes b_{1}-b_{2}\otimes u_{2}+u_{2}\otimes b_{2})\zeta+s(b_{1}-b_{2})=0,\\
(u_{1}-u_{2})\cdot\zeta=(b_{1}-b_{2})\cdot\zeta=0.
\end{cases}
\end{align}
First, we restrict the range of $\zeta\in\mathrm{span}\{u_{1}-u_{2},b_{1}-b_{2}\}^{\perp}$ so as to ensure that the third line of \eqref{DE02} holds. Then by letting $c_{1}=u_{1}\cdot\zeta=u_{2}\cdot\zeta$ and $c_{2}=b_{1}\cdot\zeta=b_{2}\cdot\zeta$, we have from \eqref{DE02} that
\begin{align}\label{DE03}
	(c_{1}+s)(u_{1}-u_{2})-c_{2}(b_{1}-b_{2})=0,\quad (c_{1}+s)(b_{1}-b_{2})-c_{2}(u_{1}-u_{2})=0.
\end{align}
Observe that when $n\geq4$, the linear space $\mathrm{span}\{u_{1}-u_{2},b_{1},b_{2}\}^{\perp}$ must be nontrivial. Therefore, by taking any element $\zeta\in\mathrm{span}\{u_{1}-u_{2},b_{1},b_{2}\}^{\perp}\setminus\{0\}$ and then picking $s=-u_{1}\cdot\zeta$, we obtain
\begin{align*}
c_{1}+s=c_{2}=0.
\end{align*}	
This implies that \eqref{DE03} holds and thus we find nonzero $\xi=(\zeta,s)$ satisfying \eqref{DE01} in the case when $n\ge4$.

\end{proof}

Denote by $X_{0}$ the set of smooth relaxed solutions, whose elements $(u,b,M,Q,q)\in C^{\infty}_{x,t}$ satisfy the following constraints:
\begin{align*}
\begin{cases}
\mathrm{supp}(u,b,M,Q,q)\subset\Omega,\\
\text{$(u,b,M,Q,q)$ solves \eqref{TR01} in $\mathbb{R}_{x}^{n}\times\mathbb{R}_{t}$,}\\
\text{$(u,b,M,Q,q)(x,t)\in\mathscr{U},$ $\forall\,(x,t)\in\mathbb{R}_{x}^{n}\times\mathbb{R}_{t},$}
\end{cases}	
\end{align*}	
where $\Omega$ is a given bounded domain. Remark that the elements of $X_{0}$ possess good properties, such as boundedness, smoothness, and compact support. Consequently, for any element from this set, we can superimpose high-frequency localized plane-wave perturbations to generate a sequence of relaxed solutions in its neighborhood. This sequence both increases the energy and maintains the element as its weak convergence limit. The detailed statement is presented as follows.
\begin{lemma}\label{lem10}
For any fixed $(u_{\ast},b_{\ast},M_{\ast},Q_{\ast},q_{\ast})\in X_{0}$, we can find a sequence $\{(u_{k},b_{k},M_{k},Q_{k},q_{k})\}\subset X_{0}$ such that 
\begin{align}\label{AQ15}
(u_{k},b_{k},M_{k},Q_{k},q_{k})\overset{\ast}{\xrightharpoonup{\quad}}(u_{\ast},b_{\ast},M_{\ast},Q_{\ast},q_{\ast}) \quad\mathrm{in}\;L^{\infty}(\Omega),
\end{align}	
and, for any $m\geq2$,
\begin{align}\label{AQ17}
&\liminf_{k\rightarrow\infty}\big(\|u_{k}\|_{L^{m}(\Omega)}^{m}+\|b_{k}\|_{L^{m}(\Omega)}^{m}\big)\notag\\
&\geq\|u_{\ast}\|_{L^{m}(\Omega)}^{m}+\|b_{\ast}\|_{L^{m}(\Omega)}^{m}+\beta\big(2|\Omega|-\|u_{\ast}\|_{L^{m}(\Omega)}^{m}-\|b_{\ast}\|_{L^{m}(\Omega)}^{m}\big)^{m},
\end{align}
where the constant $\beta=\beta(n,m,|\Omega|)>0$.		
\end{lemma}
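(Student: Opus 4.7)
The plan is to build the approximating sequence $(u_k,b_k,M_k,Q_k,q_k)$ by superimposing on $(u_*,b_*,M_*,Q_*,q_*)$ many high-frequency, spatially localized plane-wave perturbations furnished by Proposition~\ref{pro01}, with directions supplied by the wave-cone selection of Lemma~\ref{lem05}. Since $(u_*,b_*,M_*,Q_*,q_*)$ is continuous and supported in the bounded domain $\Omega$, I would first choose a finite Vitali-type collection of pairwise disjoint balls $\{B_{r_j}(y_j)\}_{j=1}^{J_k}$ covering $\Omega$ up to negligible measure, with radii $r_j$ small enough that the quintuple is nearly constant on each ball. At every center $y_j$, Lemma~\ref{lem05} supplies a wave-cone direction $\bar z_j=(\bar u_j,\bar b_j,\overline{M}_j,\overline{Q}_j,0)\in\Lambda$ along which the segment $\sigma_j=\{z_*(y_j)+s\bar z_j:|s|\leq\rho_j\}$ lies in $\mathscr{U}$ at distance at least $\tfrac12\mathrm{dist}(z_*(y_j),\partial\mathscr{U})$ from the boundary, and along which the pointwise bound \eqref{AE03} holds.

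For a small deviation $\delta_k>0$ (chosen after the $r_j$) and a large oscillation frequency $N_k\to\infty$ (chosen last), I apply Proposition~\ref{pro01} on each ball after translation and rescaling to obtain a smooth divergence-free matrix field $W_{j,k}$ supported in $B_{r_j}(y_j)$ whose image lies in the $\delta_k$-neighborhood of $\sigma_j$ and which satisfies $\dashint_{B_{r_j}(y_j)}|W_{j,k}(y)e_{n+1}|^m\,dy\geq\alpha|\bar W_je_{n+1}|^m$. Converting back via the isomorphism \eqref{M01} and setting
\begin{align*}
(u_k,b_k,M_k,Q_k,q_k)=(u_*,b_*,M_*,Q_*,q_*)+\sum_{j=1}^{J_k}(\tilde u_{j,k},\tilde b_{j,k},\tilde M_{j,k},\tilde Q_{j,k},0),
\end{align*}
one checks that the sum still solves the linear relaxation \eqref{TR01}, is smooth and compactly supported in $\Omega$, and stays inside $\mathscr{U}$ pointwise provided $\delta_k$ is small enough (this is exactly where the $\tfrac12$-relaxed local convexity is used); hence $(u_k,b_k,M_k,Q_k,q_k)\in X_0$. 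The weak-$\ast$ convergence \eqref{AQ15} follows from the high-frequency oscillation of each $W_{j,k}$ together with a Riemann--Lebesgue argument.

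For the energy lower bound \eqref{AQ17}, note that $We_{n+1}$ reads off exactly the $u$ and $b$ components in \eqref{M01}, so the integral estimate from Proposition~\ref{pro01} gives $\int_{B_{r_j}(y_j)}(|\tilde u_{j,k}|^m+|\tilde b_{j,k}|^m)\,dx\,dt\gtrsim|B_{r_j}|\,|(\bar u_j,\bar b_j)|^m$. Exploiting the essentially sinusoidal structure of each perturbation together with the elementary convexity inequality $\dashint_0^{2\pi}|p+q\sin\theta|^m\,d\theta\geq|p|^m+c_m|q|^m$ (valid for $m\geq2$), one shows that the cross terms between $(u_*,b_*)$ and the perturbations contribute nonnegatively in the limit, yielding
\begin{align*}
\liminf_{k\to\infty}\bigl(\|u_k\|_{L^m}^m+\|b_k\|_{L^m}^m\bigr)\geq\|u_*\|_{L^m}^m+\|b_*\|_{L^m}^m+c_{n,m}\alpha\sum_j|B_{r_j}|\,|(\bar u_j,\bar b_j)|^m.
\end{align*}
Summing the pointwise inequality \eqref{AE03} and passing to the Riemann sum gives
\begin{align*}
\sum_j|B_{r_j}|\,|(\bar u_j,\bar b_j)|^m\geq\frac{\varepsilon^{m-1}}{C_0}A-\frac{2\varepsilon^m|\Omega|}{C_0},\qquad A:=2|\Omega|-\|u_*\|_{L^m}^m-\|b_*\|_{L^m}^m,
\end{align*}
and choosing $\varepsilon=A/(4|\Omega|)\in(0,1)$ converts the right-hand side into $c_m|\Omega|^{-(m-1)}A^m=\beta A^m$, which is \eqref{AQ17}.

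The main obstacle will be coordinating the three scales --- the covering radii $r_j$, the deviation $\delta_k$, and the frequency $N_k$ --- so that simultaneously (i) the perturbed quintuple remains in $\mathscr{U}$ pointwise and the Riemann-sum approximation of the integral in \eqref{AE03} is sharp, (ii) the weak-$\ast$ convergence \eqref{AQ15} and the vanishing of the oscillatory cross terms in $L^m$ both genuinely hold in the limit, and (iii) the optimization in $\varepsilon$ upgrades the linear-in-$A$ pointwise bound from \eqref{AE03} to the polynomial lower bound $\beta A^m$. The factor $|\Omega|^{-(m-1)}$ produced by this last step is precisely what forces $\beta$ to depend on $|\Omega|$.
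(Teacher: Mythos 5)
Your proposal is correct and follows essentially the same route as the paper: a Vitali covering of $\Omega$ on which $z_*$ is nearly constant, localized plane waves from Proposition~\ref{pro01} along the wave-cone directions supplied by Lemma~\ref{lem05}, superposition of the rescaled perturbations to stay in $X_0$ via the $\tfrac12$-relaxed local convexity, weak-$\ast$ convergence from the oscillatory/mean-zero structure, and finally the $\varepsilon$-optimization in \eqref{AE03} to upgrade the Riemann sum $\sum_j |B_{r_j}|\,|(\bar u_j,\bar b_j)|^m$ to the power $A^m$ with $\beta\sim|\Omega|^{-(m-1)}$. The one place you diverge is the conversion of the perturbation's $L^m$-size into energy growth for $u_k,b_k$: you invoke the pointwise sinusoidal-average inequality $\dashint_0^{2\pi}|p+q\sin\theta|^m\,d\theta\geq|p|^m+c_m|q|^m$, whereas the paper applies the Clarkson-type pointwise bound $|a+c|^m\geq|a|^m+m|a|^{m-2}a\cdot c+2^{2-m}|c|^m$ (valid for $m\geq 2$) and lets the weak-$\ast$ convergence kill the linear cross term. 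Your inequality is true (by degree-$m$ homogeneity it reduces to $q=1$, $p\in[0,\infty)$, where it holds by continuity, compactness for $p\leq 1$ and the binomial expansion for $p>1$), but applying it requires the perturbation to be purely sinusoidal on each ball, which is only approximately the case after the cutoff, covering and rescaling in the proof of Proposition~\ref{pro01}; the paper's convexity-plus-weak-$\ast$ argument sidesteps this approximation issue and is the cleaner of the two, though your version can be made rigorous by absorbing the errors in the $\liminf$.
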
	

\begin{remark}
This energy growth lemma essentially reflects the core mechanism of the convex integration method developed by Gromov \cite{G1973,G1986}, namely the iterative insertion of localized, highly oscillatory perturbations to construct a sequence of smooth relaxed solutions approximating a weak solution of the considered equations.\ The early prototype of convex integration can be traced back to Nash’s iterative technique \cite{N1954} in the study of the isometric embedding problem.	

\end{remark}	

\begin{proof}
{\bf Part 1.} First, we have from Lemma \ref{lem05} that for each point $(x,t)\in\Omega$, one can find a direction $(\bar{u},\bar{b},\overline{M},\overline{Q},0)\in\Lambda$ such that for any $\varepsilon\in(0,1)$, 
\begin{align}\label{AE08}
	\frac{\varepsilon^{m-1}}{C_{0}}(2-|u_{\ast}|^{m}-|b_{\ast}|^{m})\leq	|(\bar{u},\bar{b})|^{m}+\frac{2\varepsilon^{m}}{C_{0}},
\end{align}		
and 
\begin{align}\label{AQE09}
	\sigma\subset \mathscr{U},\quad\mathrm{dist}(\sigma,\partial\mathscr{U})\geq\frac{1}{2}\mathrm{dist}((u_{\ast},b_{\ast},M_{\ast},Q_{\ast},q_{\ast}),\partial\mathscr{U}),
\end{align}		
where $C_{0}=C_{0}(n,m)>0$, and
\begin{align*}
\sigma=\big\{(u_{\ast},b_{\ast},M_{\ast},Q_{\ast},q_{\ast})+s(\bar{u},\bar{b},\overline{M},\overline{Q},0)|\,|s|\leq1\big\}.
\end{align*}	
In light of the uniform continuity of $(u_{\ast},b_{\ast},M_{\ast},Q_{\ast},q_{\ast})$, we can take a small constant 
\begin{align*}
0<\delta\leq
\begin{cases}
\frac{\mathrm{dist}((u_{\ast},b_{\ast},M_{\ast},Q_{\ast},q_{\ast}),\partial\mathscr{U})}{4\|(u_{\ast},b_{\ast},M_{\ast},Q_{\ast},q_{\ast})\|_{C^{1}}},&\text{if $(u_{\ast},b_{\ast},M_{\ast},Q_{\ast},q_{\ast})\neq0$},\\
1,&\text{if $(u_{\ast},b_{\ast},M_{\ast},Q_{\ast},q_{\ast})=0$}
\end{cases} 
\end{align*} 
such that for any two points $(x,t),(x_{\ast},t_{\ast})\in\Omega$ with $(x,t)\in B_{\delta}(x_{\ast},t_{\ast})$, 
\begin{align*}
	\tilde{\sigma}\subset \mathscr{U},\quad\mathrm{dist}(\tilde{\sigma},\partial\mathscr{U})\geq\frac{1}{4}\mathrm{dist}((u_{\ast},b_{\ast},M_{\ast},Q_{\ast},q_{\ast}),\partial\mathscr{U}),
\end{align*}		
where
\begin{align*}
\tilde{\sigma}:=\big\{(u_{\ast},b_{\ast},M_{\ast},Q_{\ast},q_{\ast})(x,t)+s(\bar{u},\bar{b},\overline{M},\overline{Q},0)(x_{\ast},t_{\ast})|\,|s|\leq1\big\}\subset\mathscr{U}.	
\end{align*}	
Applying Proposition \ref{pro01} with $(\bar{u},\bar{b},\overline{M},\overline{Q},0)(x_{\ast},t_{\ast})\in\Lambda$, we find a smooth solution $(u,b,M,Q,q)$ of \eqref{TR01} satisfying the required properties as follows:
\begin{align*}
\begin{cases}
\mathrm{supp}(u,b,M,Q,q)\subset B_{1}(0),\;\,\sup\limits_{y\in B_{1}(0)}\mathrm{dist}((u,b,M,Q,q)(y),\hat{\sigma})<\delta,\\
\dashint_{B_{1}(0)}|(u,b)|^{m}dy\geq\alpha|(\bar{u},\bar{b})|^{m},	
\end{cases}
\end{align*}
where the segment $\hat{\sigma}$ is defined by 
\begin{align*}
\hat{\sigma}=\{(2s-1)(\bar{u},\bar{b},\overline{M},\overline{Q},0)(x_{\ast},t_{\ast})|\,s\in[0,1]\}.	
\end{align*}	
For any $0<r<\delta$, after performing the following scaling transformation
\begin{align*}
(u_{r},b_{r},M_{r},Q_{r},q_{r})(x,t)=(u,b,M,Q,q)((x-x_{\ast})/r,(t-t_{\ast})/r),	
\end{align*}	
we see that the rescaled solution $(u_{r},b_{r},M_{r},Q_{r},q_{r})$ satisfies 
\begin{align}\label{AE18}
	\begin{cases}
		\mathrm{supp}(u_{r},b_{r},M_{r},Q_{r},q_{r})\subset B_{r}(x_{\ast},t_{\ast}),\\
		\sup\limits_{(x,t)\in B_{r}(x_{\ast},t_{\ast})}\mathrm{dist}((u_{r},b_{r},M_{r},Q_{r},q_{r})(x,t),\hat{\sigma})<\delta,\\
		\dashint_{B_{r}(x_{\ast},t_{\ast})}|(u_{r},b_{r})|^{m}dxdt\geq\alpha|(\bar{u},\bar{b})|^{m},	
	\end{cases}
\end{align}
which, together with \eqref{AQE09}, shows that
\begin{align}\label{AE19}
\{(u_{\ast},b_{\ast},M_{\ast},Q_{\ast},q_{\ast})+(u_{r},b_{r},M_{r},Q_{r},q_{r})|\,0<r<\delta\}\subset X_{0}.	
\end{align}

{\bf Part 2.} By finite Vitali covering lemma, we obtain that for any small $0<\varsigma,\kappa<1$, one can extract $\mathcal{N}=\mathcal{N}(\varsigma,\kappa)$ mutually disjoint balls $\{B_{r_{l}}(x_{l},t_{l})\}_{l=1}^{\mathcal{N}}\subset\Omega$ with the property that
$0<r_{l}<\kappa,\,\,l=1,...,\mathcal{N},\, |\Omega\setminus\scaleto{\cup}{7pt}_{l=1}^{\mathcal{N}}B_{r_{l}}(x_{l},t_{l})|<\varsigma.$
Owing to the uniform continuity and boundedness of $(u_{\ast},b_{\ast})$, we deduce that for any fixed $\varsigma\in(0,1)$, there exists some $\kappa_{0}\in(0,1)$ such that if $r_{l}\in(0,\kappa_{0})$, for any $m\geq2$ and $(x,t)\in B_{r_{l}}(x_{l},t_{l})$,
\begin{align*}
&|(|u_{\ast}|^{m}+|b_{\ast}|^{m})(x,t)-(|u_{\ast}|^{m}+|b_{\ast}|^{m})(x_{l},t_{l})|\notag\\
&\leq m\max\{|u_{\ast}(x,t)|^{m-1},|u_{\ast}(x_{l},t_{l})|^{m-1}\}|u_{\ast}(x,t)-u_{\ast}(x_{l},t_{l})|\notag\\
&\quad+m\max\{|b_{\ast}(x,t)|^{m-1},|b_{\ast}(x_{l},t_{l})|^{m-1}\}|b_{\ast}(x,t)-b_{\ast}(x_{l},t_{l})|\leq2m\varsigma.
\end{align*}	
Then we have
\begin{align}\label{AE16}
\int_{\Omega}(2-|u_{\ast}|^{m}-|b_{\ast}|^{m})\leq&\sum^{\mathcal{N}}_{l=1}(2-|u_{\ast}(x_{l},t_{l})|^{m}-|b_{\ast}(x_{l},t_{l})|^{m})|B_{r_{l}}(x_{l},t_{l})|\notag\\
&+2m\varsigma\sum^{\mathcal{N}}_{l=1}|B_{r_{l}}(x_{l},t_{l})|+2\big|\Omega\setminus\scaleto{\cup}{7pt}_{l=1}^{\mathcal{N}}B_{r_{l}}(x_{l},t_{l})\big|\notag\\\leq&2\sum^{\mathcal{N}}_{l=1}\big(2-|u_{\ast}(x_{l},t_{l})|^{m}-|b_{\ast}(x_{l},t_{l})|^{m}\big)|B_{r_{l}}(x_{l},t_{l})|,
\end{align}	
provided that the constant $\varsigma$ is chosen to satisfy
\begin{align*}
0<\varsigma\leq	\frac{|\Omega|\min\limits_{(x,t)\in\overline{\Omega}}(2-|u_{\ast}|^{m}-|b_{\ast}|^{m})}{4(1+m|\Omega|)}.
\end{align*}	 
For any $k\in\mathbb{N}$ with $1/k<\min\{\delta,\kappa_{0}\}$, we can select $\mathcal{N}_{k}=\mathcal{N}(k,\varsigma)$ mutually disjoint balls $B_{r_{k,l}}(x_{k,l},t_{k,l})\subset\Omega$ with radii less than $1/k$ such that \eqref{AE16} is satisfied. On every such ball $B_{r_{k,l}}(x_{k,l},t_{k,l})$, the  construction in {\bf Part 1} can be applied to obtain $(u_{k,l},b_{k,l},M_{k,l},Q_{k,l},q_{k,l})$ with the same properties as in \eqref{AE18}--\eqref{AE19}. Let
\begin{align*}
(u_{k},b_{k},M_{k},Q_{k},q_{k})=(u_{\ast},b_{\ast},M_{\ast},Q_{\ast},q_{\ast})+\sum_{l=1}^{\mathcal{N}_{k}}(u_{k,l},b_{k,l},M_{k,l},Q_{k,l},q_{k,l}).
\end{align*}	
By picking $\varepsilon=\frac{1}{8}\dashint_{\Omega}\big(2-|u_{\ast}|^{m}-|b_{\ast}|^{m}\big)dxdt$ in \eqref{AE08}, we have from \eqref{AE18}--\eqref{AE16} that $(u_{k},b_{k},M_{k},Q_{k},q_{k})\in X_{0}$, and
\begin{align}\label{AQE19}
&\int_{\Omega}|(u_{k},b_{k})-(u_{\ast},b_{\ast})|^{m}dxdt=\sum^{\mathcal{N}_{k}}_{l=1}\int_{B_{r_{k,l}}(x_{k,l},t_{k,l})}|(u_{k,l},b_{k,l})|^{m}dxdt\notag\\
&\geq\alpha\sum^{\mathcal{N}_{k}}_{l=1}|(\bar{u},\bar{b})(x_{k,l},t_{k,l})|^{m}|B_{r_{k,l}}(x_{k,l},t_{k,l})|\notag\\
&\geq\frac{\alpha\varepsilon^{m-1}}{C_{0}}\sum^{\mathcal{N}_{k}}_{l=1}\big(2-(|u_{\ast}|^{m}+|b_{\ast}|^{m})(x_{k,l},t_{k,l})\big)|B_{r_{k,l}}(x_{k,l},t_{k,l})|-\frac{2\alpha\varepsilon^{m}|\Omega|}{C_{0}}\notag\\
&\geq\frac{\alpha\varepsilon^{m-1}}{2C_{0}}\int_{\Omega}\big(2-|u_{\ast}|^{m}-|b_{\ast}|^{m}\big)dxdt-\frac{2\alpha\varepsilon^{m}|\Omega|}{C_{0}}\notag\\	
&\geq\frac{\alpha}{4^{2m-1}C_{0}|\Omega|^{m-1}}\bigg(\int_{\Omega}\big(2-|u_{\ast}|^{m}-|b_{\ast}|^{m}\big)dxdt\bigg)^{m}.
\end{align}	
From the proof of Proposition \ref{pro01}, we have 
\begin{align*}
\int_{\mathbb{R}^{n}_{x}\times\mathbb{R}_{t}}(u_{k,l},b_{k,l},M_{k,l},Q_{k,l},q_{k,l})dxdt=0,\quad l=1,2,...,\mathcal{N}_{k},	
\end{align*}	
which, in combination with the boundedness of $(u_{k,l},b_{k,l},M_{k,l},Q_{k,l},q_{k,l})$, leads to that \eqref{AQ15} holds. Combining \eqref{AQ15} and \eqref{AQE19}, we deduce that for any $m\geq2$,
\begin{align*}
&\liminf_{k\rightarrow\infty}\big(\|u_{k}\|_{L^{m}(\Omega)}^{m}+\|b_{k}\|_{L^{m}(\Omega)}^{m}\big)\notag\\
&\geq\|u_{\ast}\|_{L^{m}(\Omega)}^{m}+\|b_{\ast}\|_{L^{m}(\Omega)}^{m}+2^{2-m}\liminf_{k\rightarrow\infty}\big(\|u_{k}-u_{\ast}\|_{L^{m}(\Omega)}^{m}+\|b_{k}-b_{\ast}\|_{L^{m}(\Omega)}^{m}\big)\notag\\
&\quad+m\liminf_{k\rightarrow\infty}\int_{\Omega}\big(|u_{\ast}|^{m-2}u_{\ast}\cdot(u_{k}-u_{\ast})+|b_{\ast}|^{m-2}b_{\ast}\cdot(b_{k}-b_{\ast})\big)dxdt\notag\\
&\geq\|u_{\ast}\|_{L^{m}(\Omega)}^{m}+\|b_{\ast}\|_{L^{m}(\Omega)}^{m}+2^{3-2m}\liminf_{k\rightarrow\infty}\|(u_{k}-u_{\ast},b_{k}-b_{\ast})\|_{L^{m}(\Omega)}^{m}\notag\\
&\geq\|u_{\ast}\|_{L^{m}(\Omega)}^{m}+\|b_{\ast}\|_{L^{m}(\Omega)}^{m}+\frac{\alpha}{2^{6m-5}C_{0}|\Omega|^{m-1}}\big(2|\Omega|-\|u_{\ast}\|_{L^{m}(\Omega)}^{m}-\|b_{\ast}\|_{L^{m}(\Omega)}^{m}\big)^{m},
\end{align*}	
which reads that \eqref{AQ17} holds.

\end{proof}

\section{The proofs of Theorems \ref{Main01} and \ref{Main02}}\label{SEC04}

Endowing $X_{0}$ with the topology of $L^{\infty}$ weak* convergence of $(u,b,M,Q,q)$, we set $X$ to be the closure of $X_{0}$ with respect to this topology. Combining Lemma \ref{lem03}, we see that $X$ is a nonempty bounded closed subset of $L^{\infty}(\Omega)$. Then the set $X$ is compact in the weak* topology, and thus metrizable. 
\begin{lemma}\label{lem08}
If an element $(u,b,M,Q,q)\in X$ satisfies $|u(x,t)|=1$ or $|b(x,t)|=1$ for almost all $(x,t)\in\Omega$, then $(u,b)$ and $p:=q$ forms a weak solution of  the ideal MHD equations in $\mathbb{R}_{x}^{n}\times\mathbb{R}_{t}$ with $(u,b,p)$ vanishing outside $\Omega$.	
	
\end{lemma}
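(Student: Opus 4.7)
The plan is to realize $(u,b,p):=(u,b,q)$ as a distributional solution of \eqref{IMHD} by combining three ingredients: that elements of $X$ already satisfy the linear relaxation \eqref{TR01}, that they take values in $K^{c_{0}}\times[-1,1]$ almost everywhere, and a convex-geometric rigidity saying that saturation of the pointwise magnitude constraints forces membership in the extremal set $K$, hence the nonlinear identities $M=u\otimes u-b\otimes b$ and $Q=b\otimes u-u\otimes b$ required to reduce the relaxation back to genuine ideal MHD.

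Since $(u,b,M,Q,q)\in X$, there is a sequence $(u_{j},b_{j},M_{j},Q_{j},q_{j})\in X_{0}$ converging to it in the weak-$\ast$ topology of $L^{\infty}(\Omega)$. Every member of the sequence solves the linear system \eqref{TR01} classically and is supported in $\Omega$; because \eqref{TR01} is linear with constant coefficients, its distributional formulation passes to the weak-$\ast$ limit, so $(u,b,M,Q,q)$ solves \eqref{TR01} distributionally on $\mathbb{R}_{x}^{n}\times\mathbb{R}_{t}$ with support in $\overline{\Omega}$. Moreover, since $(u_{j},b_{j},M_{j},Q_{j})(x,t)\in K^{c_{0}}$ pointwise a.e.\ and $K^{c_{0}}$ is a closed convex subset of a finite-dimensional space (the convex hull of the compact set $K$), Mazur's lemma applied to suitable convex combinations transfers the inclusion to the weak-$\ast$ limit: $(u,b,M,Q)(x,t)\in K^{c_{0}}$ for a.e.\ $(x,t)\in\Omega$.

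The decisive step is the convex-geometric claim: any $(u,b,M,Q)\in K^{c_{0}}$ saturating the magnitude constraint, which in the present setting amounts to $|u|=|b|=1$, actually lies in $K$ itself. By Carath\'{e}odory's theorem one may write $(u,b,M,Q)=\sum_{k=1}^{N}\theta_{k}(u_{k},b_{k},u_{k}\otimes u_{k}-b_{k}\otimes b_{k},b_{k}\otimes u_{k}-u_{k}\otimes b_{k})$ with $\theta_{k}\geq0$, $\sum_{k}\theta_{k}=1$, and $|u_{k}|=|b_{k}|=1$. Since every point of $\mathbb{S}^{n-1}$ is an extreme point of the closed unit ball, the identity $u=\sum_{k}\theta_{k}u_{k}$ with $|u|=1$ forces $u_{k}=u$ whenever $\theta_{k}>0$, and the same rigidity applied to $b$ yields $b_{k}=b$. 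Consequently $M=u\otimes u-b\otimes b$ and $Q=b\otimes u-u\otimes b$ hold pointwise a.e., which matches \eqref{TR02}. Since $|u|^{2}-|b|^{2}=0$ a.e., the trace correction in \eqref{TR02} vanishes and $p:=q$ serves as the genuine pressure.

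Substituting these pointwise identities into the already-established distributional form of \eqref{TR01}, one obtains that $(u,b,p)$ satisfies \eqref{IMHD} with $f=g=0$ in the sense of the weak-solution definition from Section~1, and vanishes outside $\Omega$. The main obstacle I expect is the convex-geometry step, which demands that the boundary condition on the magnitudes of $u$ and $b$ force not just the linear data but also the quadratic tensors $M$ and $Q$ to collapse back onto the nonlinear constraint cutting out $K$ from $K^{c_{0}}$; this relies essentially on the spherical extremality of $u_{k}$ and $b_{k}$ and on Carath\'{e}odory's finite decomposition so that the rigidity is applied to finitely many terms. A subsidiary technical point is the passage from the open constraint $\mathscr{U}$ to its weak-$\ast$ closure inside $K^{c_{0}}\times[-1,1]$, which rests on the convexity of $K^{c_{0}}$ and a standard Mazur-type averaging argument.
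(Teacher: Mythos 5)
Your proposal is correct and follows essentially the same route as the paper's proof: weak-$\ast$ stability of the linear relaxation \eqref{TR01} and of the pointwise constraint $(u,b,M,Q)\in K^{c_{0}}$ for elements of $X$, then Carath\'eodory's theorem combined with the extremality of points of $\mathbb{S}^{n-1}$ to force the decomposition to collapse, giving $M=u\otimes u-b\otimes b$, $Q=b\otimes u-u\otimes b$ and hence a weak solution with $p:=q$. The only caveat is that your rigidity step (like the paper's own argument as it is actually applied, where $|u|=|b|=1$ a.e.\ is established) uses both saturations, since the literal ``or'' hypothesis alone would only yield $u_{k}=u$ (or only $b_{k}=b$) and not the identity for $M$; this is an imprecision shared with the paper rather than a gap specific to your proof.
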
	

\begin{proof}
Due to the fact that $\overline{\mathscr{U}}$ is compact and convex, each $(u,b,M,Q,q)\in X$ provides a solution to \eqref{TR01} with support in $\overline{\Omega}$ and	its image is contained in $\overline{\mathscr{U}}$. Especially, there has $(u,b,M,Q)(x,t)\in K^{c_{0}}$ for almost every $(x,t)\in\mathbb{R}^{n}_{x}\times\mathbb{R}_{t}$. For simplicity, write $z'=(u,b,M,Q)$.\ Then using Carath\'{e}odory's theorem, we have $z'=\sum^{n(n+2)}_{k=1}\lambda_{k}z_{k}'$ for some $z_{k}'=(u_{k},b_{k},M_{k},Q_{k})\in K$, $\lambda_{k}\in[0,1]$ with $\sum^{n(n+2)}_{k=1}\lambda_{k}=1$. In particular, $(u,b)=\sum^{n(n+2)}_{k=1}\lambda_{k}(u_{k},b_{k})$. Since $|u|=1$ or $|b|=1$ almost everywhere in $\Omega$, there exists some $k_{0}\in[1,n(n+2)]$ such that $\lambda_{k_{0}}=1$ and $\lambda_{k}=0$ for $k\neq k_{0}$. Then we obtain $z'=z_{k_{0}}'\in K$, which implies that Lemma \ref{lem08} holds.
	
\end{proof}

From Lemma \ref{lem08}, we reduce the problem of constructing nontrivial solutions of the ideal MHD system to finding the elements in $X$ satisfying that $|u(x,t)|=1$ or $|b(x,t)|=1$ for almost every $(x,t)\in\Omega$.\ In what follows, we carry out this objective by making use of Baire category method and convex integration scheme, respectively.

\subsection{Baire category method}
We endow $X$ with a metric $d^{\ast}_{\infty}$ that induces the weak* topology of $L^{\infty}$, thereby making $(X,d_{\infty}^{\ast})$ be a complete metric space. For any $m\geq2,$ introduce the identity map $I_{m}$ as follows:
\begin{align*}
I_{m}:(X,d^{\ast}_{\infty})&\longrightarrow L^{m}(\mathbb{R}_{x}^{n}\times\mathbb{R}_{t}),\notag\\
(u,b,M,Q,q)&\longmapsto (u,b,M,Q,q).
\end{align*}
\begin{lemma}\label{lem09}
For every $m\geq2$, the operator $I_{m}$ belongs to the class of Baire-1 functions and thus its continuity points constitute a residual set in $(X,d^{\ast}_{\infty})$.

\end{lemma}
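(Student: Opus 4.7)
The statement has two parts: first, that $I_m$ is Baire-1 (a pointwise limit of continuous maps from $(X,d^{\ast}_\infty)$ to $L^m$), and second, the consequence about continuity points. The second part is immediate from the classical Baire--Osgood theorem once the first is established: since $(X,d^{\ast}_\infty)$ is a compact, hence complete, metric space (as noted just before the lemma), a Baire-1 map into a metric space has its set of continuity points equal to a dense $G_\delta$, i.e.\ a residual set. So the entire burden is to exhibit $I_m$ as a pointwise limit of continuous functions.

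The natural candidates are mollifications. Fix a standard smooth mollifier $\phi$ on $\mathbb{R}^{n+1}$ with $\phi\ge 0$, $\int\phi=1$, and $\mathrm{supp}\,\phi\subset B_1(0)$, and set $\phi_k(y)=k^{n+1}\phi(ky)$. Define
\begin{equation*}
T_k:(X,d^{\ast}_\infty)\longrightarrow L^m(\mathbb{R}^n_x\times\mathbb{R}_t),\qquad T_k(u,b,M,Q,q)=(u,b,M,Q,q)\ast\phi_k .
\end{equation*}
I will show $T_k$ is continuous and $T_k\to I_m$ pointwise on $X$. For continuity, suppose $z^{(j)}\to z$ in $(X,d^{\ast}_\infty)$, i.e.\ weakly-$\ast$ in $L^\infty$. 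Then for each fixed $y$, $(z^{(j)}\ast\phi_k)(y)=\langle z^{(j)},\phi_k(y-\cdot)\rangle\to(z\ast\phi_k)(y)$, since $\phi_k(y-\cdot)\in L^1$. Moreover all elements of $X$ share a uniform $L^\infty$ bound $R$ and are supported in a fixed bounded set $\overline\Omega$ (this is inherited from $X_0$ and preserved by weak-$\ast$ limits), so $z^{(j)}\ast\phi_k$ are uniformly bounded and supported in a fixed compact neighborhood $\Omega_k$ of $\Omega$. The dominated convergence theorem then yields $T_k(z^{(j)})\to T_k(z)$ in $L^m$, proving continuity.

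For pointwise convergence $T_k(z)\to I_m(z)$, fix $z\in X$. Each such $z$ lies in $L^\infty\cap L^m(\Omega_0)$ for some bounded set $\Omega_0$, and standard mollification theory gives $z\ast\phi_k\to z$ in $L^m$ as $k\to\infty$. Hence $I_m$ is the pointwise (in $X$) limit of the continuous maps $T_k$, which is the definition of a Baire-1 function. Applying Baire--Osgood in the complete metric space $(X,d^\ast_\infty)$ concludes the proof.

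\textbf{Main obstacle.} The only subtle point is the continuity of $T_k$: one must be careful to work in a class where weak-$\ast$ convergence in $L^\infty$ plus uniform compact support actually implies $L^m$ convergence of the mollified sequences, and this is where the uniform bound and common support of elements of $X$ (propagated from $X_0$ through weak-$\ast$ closure) are essential. Everything else is soft: pointwise convergence of the mollification to the function is standard, and the Baire category conclusion is a citation.
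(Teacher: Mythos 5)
Your proof is correct and follows essentially the same route as the paper: approximate $I_m$ by the mollifications $T_k = I_{m,1/k}$, show each $T_k$ is weak$^*$-to-$L^m$ continuous using the uniform $L^\infty$ bound and common compact support of elements of $X$, observe pointwise convergence $T_k(z)\to z$ in $L^m$, and invoke Baire--Osgood (the paper cites Lemma 7.3 of Oxtoby) for the residual set of continuity points. The only cosmetic difference is that you pass from a.e.\ pointwise convergence of $T_k(z^{(j)})$ to $L^m$ convergence via dominated convergence, whereas the paper invokes Egorov's theorem together with Fatou's lemma; given the uniform bound and fixed compact support, your dominated-convergence step is the more direct argument and the two are equivalent.
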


\begin{proof}
Define the approximation maps as follows: for any $r>0$ and $m\geq2$,	
\begin{align*}
I_{m,r}:(X,d^{\ast}_{\infty})&\longrightarrow L^{m}(\mathbb{R}_{x}^{n}\times\mathbb{R}_{t}),\notag\\
(u,b,M,Q,q)&\longmapsto (u,b,M,Q,q)\ast\rho_{r},	
\end{align*}	
where $\rho_{r}=r^{-(n+1)}\rho(rx,rt)$ denotes the standard mollifying kernel on $\mathbb{R}^{n+1}$. Then we obtain that for any $(u,b,M,Q,q)\in X,$
\begin{align*}
I_{m,r}(u,b,M,Q,q)\longrightarrow I_{m}(u,b,M,Q,q)\quad\mathrm{in}\;L^{m},\quad \text{as $r\rightarrow0$.}	
\end{align*}		
For any given $r>0$, if $\{(u_{k},b_{k},M_{k},Q_{k},q_{k})\}\subset X$ satisfies 
\begin{align*}
	(u_{k},b_{k},M_{k},Q_{k},q_{k})\overset{\ast}{\xrightharpoonup{\quad}}(u,b,M,Q,q) \quad\mathrm{in}\;L^{\infty},
\end{align*}	
for some $(u,b,M,Q,q)\in X$, then we have
\begin{align*}
I_{m,r}(u_{k},b_{k},M_{k},Q_{k},q_{k})\longrightarrow I_{m,r}(u,b,M,Q,q),\quad \text{for a.e. $(x,t)\in\mathbb{R}_{x}^{n}\times\mathbb{R}_{t}$},	
\end{align*}		
as $k\rightarrow\infty$. This, in combination with Egorov's theorem and Fatou's lemma, gives that
\begin{align}\label{AQ09}
	I_{m,r}(u_{k},b_{k},M_{k},Q_{k},q_{k})\longrightarrow I_{m,r}(u,b,M,Q,q)\quad \text{in $L^{m}$,\quad as $k\rightarrow\infty$}.	
\end{align}
These above facts, together with Lemma 7.3 in \cite{O1980}, show that Lemma \ref{lem09} holds.	
	
\end{proof}	
A consequence of Lemma \ref{lem09}, H\"{o}lder's inequality, the boundedness and compact support of functions in $(X,d^{\ast}_{\infty})$ gives the following result.
\begin{corollary}\label{Cor09}
For any $m>2$, the set of continuity points of $I_{m}$ coincides with that of $I_{2}$.
\end{corollary}

We are now ready to give the proof of Theorem \ref{Main01}.
\begin{proof}[Proof of Theorem \ref{Main01}]
Combining Lemmas \ref{lem08} and \ref{lem09} with Corollary \ref{Cor09}, it suffices to prove that for any fixed $m\geq2$, if $(u,b,M,Q,q)$ is a continuity point of $I_{m}$, then we have $|u(x,t)|=|b(x,t)|=1$ for almost all $(x,t)\in\Omega$. Since $I_{m}$ is continuous at the point $(u,b,M,Q,q)$, we can choose a weak*-convergent sequence $\{(u_{k},b_{k},M_{k},Q_{k},q_{k})\}\subset X_{0}$ approaching $(u,b,M,Q,q)$. With the aid of Lemma \ref{lem10} and a standard diagonalization, we can extract another sequence $(\tilde{u}_{k},\tilde{b}_{k},\widetilde{M}_{k},\widetilde{Q}_{k},\tilde{q}_{k})$ converging weakly* to the same limit and satisfying
\begin{align*}
&\|u\|_{L^{m}(\Omega)}^{m}+\|b\|_{L^{m}(\Omega)}^{m}=\liminf_{k\rightarrow\infty}\big(\|\tilde{u}_{k}\|_{L^{m}(\Omega)}^{m}+\|\tilde{b}_{k}\|_{L^{m}(\Omega)}^{m}\big)\notag\\
&\geq\liminf_{k\rightarrow\infty}\Big(\|u_{k}\|_{L^{m}(\Omega)}^{m}+\|b_{k}\|_{L^{m}(\Omega)}^{m}+\frac{\beta}{2}\big(2|\Omega|-\|u_{k}\|_{L^{m}(\Omega)}^{m}-\|b_{k}\|_{L^{m}(\Omega)}^{m}\big)^{m}\Big)\notag\\	
&\geq\|u\|_{L^{m}(\Omega)}^{m}+\|b\|_{L^{m}(\Omega)}^{m}+\frac{\beta}{2}\big(2|\Omega|-\|u\|_{L^{m}(\Omega)}^{m}-\|b\|_{L^{m}(\Omega)}^{m}\big)^{m}.
\end{align*}	
This, together with the fact of $|u|\leq1$ and $|b|\leq1$ almost everywhere in $\Omega$, leads to that $|u|=|b|=1$ a.e. in $\Omega$ and therefore Theorem \ref{Main01} is proved.

\end{proof}

\subsection{Convex integration scheme}

In this subsection, we present a more straightforward constructive proof of Theorem \ref{Main01} by employing the convex integration scheme developed in \cite{DS2009}, which is inspired by \cite{MS2003}. 
\begin{proof}[Another proof of Theorem \ref{Main01}]
Starting from $(u_{0},b_{0},M_{0},Q_{0},q_{0})\equiv0$ in $\mathbb{R}_{x}^{n}\times\mathbb{R}_{t}$ and using an inductive argument, we can successively construct a sequence of functions $z_{j}=(u_{j},b_{j},M_{j},Q_{j},q_{j})\in X_{0}$ and select a sequence of small positive constants $r_{j}<2^{-j}$ such that for $m\geq2$ and $j\geq1$,
\begin{align}\label{AQE16}
&\|u_{j}\|_{L^{m}(\Omega)}^{m}+\|b_{j}\|_{L^{m}(\Omega)}^{m}\notag\\
&\geq\|u_{j-1}\|_{L^{m}(\Omega)}^{m}+\|b_{j-1}\|_{L^{m}(\Omega)}^{m}+\frac{\beta}{2}\big(2|\Omega|-\|u_{j-1}\|_{L^{m}(\Omega)}^{m}-\|b_{j-1}\|_{L^{m}(\Omega)}^{m}\big)^{m},
\end{align}	
and
\begin{align}\label{AQE18}
\begin{cases}	
\|z_{j}-z_{j}\ast\rho_{r_{j}}\|_{L^{m}(\Omega)}<2^{-j},\\
\|(z_{j}-z_{j-1})\ast\rho_{r_{i}}\|_{L^{m}(\Omega)}<2^{-j},\quad\forall\,0\leq i\leq j-1,	
\end{cases}
\end{align}	
where the constant $\beta$ is given by \eqref{AQ17} and $\rho_{r}=r^{-(n+1)}\rho(rx,rt)$ is the classical mollifying kernel on $\mathbb{R}^{n+1}$. In fact, assume that the results in \eqref{AQE16}--\eqref{AQE18} hold for any $j\leq k$, then we construct $z_{k+1}=(u_{k+1},b_{k+1},M_{k+1},Q_{k+1},q_{k+1})\in X_{0}$ such that \eqref{AQE16}--\eqref{AQE18} also hold for $j=k+1$. From Lemma \ref{lem10}, we obtain a sequence $\{u_{k,l}\}\subset X_{0}$ satisfying that 
\begin{align*}
	(u_{k,l},b_{k,l},M_{k,l},Q_{k,l},q_{k,l})\overset{\ast}{\xrightharpoonup{\quad}}(u_{k},b_{k},M_{k},Q_{k},q_{k}) \quad\mathrm{in}\;L^{\infty},
\end{align*}	
and
\begin{align*}
	&\liminf_{l\rightarrow\infty}\big(\|u_{k,l}\|_{L^{m}(\Omega)}^{m}+\|b_{k,l}\|_{L^{m}(\Omega)}^{m}\big)\notag\\
	&\geq\|u_{k}\|_{L^{m}(\Omega)}^{m}+\|b_{k}\|_{L^{m}(\Omega)}^{m}+\beta\big(2|\Omega|-\|u_{k}\|_{L^{m}(\Omega)}^{m}-\|b_{k}\|_{L^{m}(\Omega)}^{m}\big)^{m}.
\end{align*}
By the same arguments as in \eqref{AQ09}, we obtain that for any $r>0$, if $l\rightarrow\infty$,
\begin{align*}
	(u_{k,l},b_{k,l},M_{k,l},Q_{k,l},q_{k,l})\ast\rho_{r}\longrightarrow(u_{k},b_{k},M_{k},Q_{k},q_{k})\ast\rho_{r}, \quad\mathrm{in}\;L^{m}.	
\end{align*}	
Then by taking a sufficiently large constant $l_{0}>0$ and a small enough constant $0<r_{k+1}<2^{-(k+1)}$, and setting
\begin{align*}
 (u_{k+1},b_{k+1},M_{k+1},Q_{k+1},q_{k+1}):=(u_{k,l_{0}},b_{k,l_{0}},M_{k,l_{0}},Q_{k,l_{0}},q_{k,l_{0}}),
\end{align*} 
we demonstrate that \eqref{AQE16}--\eqref{AQE18} holds for $j=k+1$. 

Since the sequence $\{z_{k}\}$ remains bounded in $L^{\infty}(\mathbb{R}_{x}^{n}\times\mathbb{R}_{t})$, we obtain that there exists a subsequence converging weakly* to an element $z=(u,b,M,Q,q)\in X$. Without loss of generality, we relabel and denote this subsequence again by $\{z_{k}\}$. In exactly the same way as in \eqref{AQ09}, we deduce that for any given $r>0,$
\begin{align*}
\|(z_{j}-z)\ast\rho_{r}\|_{L^{m}(\Omega)}\longrightarrow0,\quad\text{as $j\rightarrow\infty$},	
\end{align*}
which, together with \eqref{AQE18}, leads to that
\begin{align*}
&\limsup_{k\rightarrow\infty}\|(z_{k}-z)\ast\rho_{r_{k}}\|_{L^{m}(\Omega)}\notag\\
&\leq\limsup_{k\rightarrow\infty}\limsup_{l\rightarrow\infty}\bigg(\sum^{l}_{j=1}\|(z_{k+j-1}-z_{k+j})\ast\rho_{r_{k}}\|_{L^{m}(\Omega)}+\|(z_{k+l}-z)\ast\rho_{r_{k}}\|_{L^{m}(\Omega)}\bigg)\notag\\
&\leq\limsup_{k\rightarrow\infty}\limsup_{l\rightarrow\infty}\sum^{l}_{j=1}\|(z_{k+j-1}-z_{k+j})\ast\rho_{r_{k}}\|_{L^{m}(\Omega)}\leq\limsup_{k\rightarrow\infty}\sum^{\infty}_{j=1}2^{-(k+j)}=0.
\end{align*}	
It then follows from \eqref{AQE18} again that
\begin{align*}
&\limsup_{k\rightarrow\infty}\|z_{k}-z\|_{L^{m}(\Omega)}\notag\\
&\leq\limsup_{k\rightarrow\infty}\big(\|z_{k}-z_{k}\ast\rho_{r_{k}}\|_{L^{m}(\Omega)}+\|(z_{k}-z)\ast\rho_{r_{k}}\|_{L^{m}(\Omega)}+\|z\ast\rho_{r_{k}}-z\|_{L^{m}(\Omega)}\big)\notag\\
&\leq\limsup_{k\rightarrow\infty}2^{-k}=0.
\end{align*}	
This, in combination with \eqref{AQE16}, shows that $|u(x,t)|=|b(x,t)|=1$ for almost all $(x,t)\in\Omega$. Then the proof is finished by appealing to Lemma \ref{lem08}.

\end{proof}

\begin{remark}
It is evident from the preceding proofs that the two approaches, despite their distinct outward formulations, are intrinsically unified by the same underlying mechanism: mollification as the bridge from weak to strong convergence.	For more detailed discussions on these two methods, see e.g. \cite{K2003,S2006}.
\end{remark}	
It remains to prove Theorem \ref{Main02}.
\begin{proof}[Proof of Theorem \ref{Main02}]
For every solution $(u,b,p)$ given by Theorem \ref{Main01}, it follows from the above proofs that there exists a sequence $\{(u_{k},b_{k},M_{k},Q_{k},p_{k})\}\subset X_{0}$ such that for any $m\geq 2$,
\begin{align}\label{E10}
	(u_{k},b_{k},M_{k},Q_{k},p_{k})\longrightarrow(u,b,M,Q,p)\quad\text{in $L^{m}$},	
\end{align}	
where
\begin{align}\label{E12}
	M=u\otimes u-b\otimes b,\quad Q=b\otimes u-u\otimes b.	
\end{align}	
Take
\begin{align*}
	\begin{cases}
		f_{k}=\mathrm{div}(u_{k}\otimes u_{k}-b_{k}\otimes b_{k}-M_{k})=:\mathrm{div}F_{k},\\
		g_{k}=\mathrm{div}(b_{k}\otimes u_{k}-u_{k}\otimes b_{k}-Q_{k})=:\mathrm{div}G_{k}.	
	\end{cases}		
\end{align*}	
Due to the fact that $\max\{\|u_{k}\|_{L^{\infty}},\|b_{k}\|_{L^{\infty}},\|u\|_{L^{\infty}},\|b\|_{L^{\infty}}\}\leq1$, it then follows from \eqref{E10}--\eqref{E12} that
\begin{align*}
	\|F_{k}\|_{L^{m}(\Omega)}\rightarrow0,\quad\|G_{k}\|_{L^{m}(\Omega)}\rightarrow0,\quad\mathrm{as}\;k\rightarrow\infty.
\end{align*}
Denote $m'=\frac{m}{m-1}$. For any $\varphi\in W_{0}^{1,m'}(\Omega)$ with $\|\varphi\|_{W^{1,m'}}\leq1$, we have
\begin{align*}
	|\langle f_{k},\varphi \rangle|=\bigg|\int_{\Omega}F_{k}:\nabla\varphi dxdt\bigg|\leq\|F_{k}\|_{L^{m}(\Omega)}\|\nabla\varphi\|_{L^{m'}(\Omega)}\rightarrow0,	\quad\mathrm{as}\;k\rightarrow\infty.
\end{align*}	
Similarly, we also have $|\langle g_{k},\varphi \rangle|\rightarrow0$. The proof is complete.

\end{proof}


%

\noindent{\bf{\large Acknowledgements.}} This work was partially supported by the National Key research and development program of
China (No.\ 2022YFA1005700). C. Miao was partially supported by the National Natural Science Foundation of China (No. 12371095). Z. Zhao was partially supported by NSFC (No.\ 12501254).




\end{document}